\begin{document}
\newtheorem{defn}{Definition}
\newtheorem{lem}{Lemma}
\newtheorem{cor}{Corollary}
\newtheorem{thm}{Theorem}
\newtheorem{cex}{Counterexample}
\newtheorem{rem}{Remark}

\title{Sufficient Conditions for Graphicality of Bidegree Sequences}
\author{David Burstein\footnotemark[2] \footnotemark[3] \and Jonathan Rubin\footnotemark[2] \footnotemark[3]}
\date{\today}

\baselineskip 1.5em
\maketitle
\renewcommand{\thefootnote}{\fnsymbol{footnote}}
\footnotetext[2]{301 Thackeray Hall.  Department of Mathematics. University of Pittsburgh, PA 15260}
\footnotetext[3]{This work was partially supported by NSF Award DMS 1312508.}
\bibliographystyle{plain}
\pagestyle{myheadings}
\thispagestyle{plain}
\markboth{D. Burstein and J. Rubin}{Sufficient Conditions for Graphicality}
\begin{abstract}
There are a variety of existing conditions for a degree sequence to be graphic.  When a degree sequence satisfies any of these conditions, there exists a graph that realizes the sequence.  We formulate several novel sufficient graphicality criteria that depend on the number of elements in the  sequence, corresponding to the number of nodes in an associated graph, and the mean degree of the sequence.  These conditions, which are stated in terms of bidegree sequences for directed graphs, are easier to apply than classic necessary and sufficient graphicality conditions involving multiple inequalities.  They are also more flexible than more recent graphicality conditions, in that they imply graphicality of some degree sequences not covered by those conditions.  The form of our results will allow them to be easily used for the generation of graphs with particular degree sequences for applications.
\end{abstract}

\textbf{Keywords:} degree sequence, directed graph, graphic, graphicality, \newline \indent Gale-Ryser theorem 
\newline\newline
\indent \textbf{AMS:} 05C20, 05C80, 05C82 

\section{Introduction}
Generating random graphs with various properties is relevant for a wide variety of applications, from modeling neural networks \cite{zhao11} to internet security \cite{Alb00}.  To generate an undirected random graph with a fixed number of nodes, it is natural to first select a degree distribution  through some process and then to connect the nodes in a way that is consistent with the selected distribution; similarly, a bidegree distribution would be selected if  a directed graph  were desired.  A well known issue with this procedure is that not all degree distributions are graphic; that is, it is easy to write down a sequence of $n$ natural numbers $\{ d_i \}$ such that there is no graph with $n$ nodes for which the degree of the $i$th node is $d_i$ for all $i$.  The aim of this work is to rigorously establish novel, relatively inclusive, easily checked conditions on a bidegree sequence that ensure that it is graphic and hence corresponds to one or more directed graphs. Such conditions can be used as constraints on a degree distribution to ensure that sampling from that distribution will yield a graphic degree sequence or to ease the process of verifying that a (randomly generated) bidegree sequence corresponds to a directed graph.

To start, we briefly review the background literature on sufficient conditions to guarantee graphicality, starting with some standard definitions and theorems.  In doing so, and in the rest of the paper, we will employ what is known as Hoare-Ramshaw notation for closed sets of integers, namely $[a..b] := \{ x \in \mathbb{Z}: a \leq x \leq b \}$ for $a, b \in \mathbb{Z}$.  We will also define $\mathbb{N}_0 = \mathbb{N} \cup \{0\}$ and $\mathbb{N}_0^{(n,2)} = \{ (\vec{a}, \vec{b}) : \vec{a} \in \mathbb{N}_0 \; \mbox{and} \; \vec{b} \in \mathbb{N}_0 \}$. 

\begin{defn} \label{def:graphic} A bidegree sequence $\vec{d}=(\vec{a},\vec{b})\in\mathbb{N}_0^{(n,2)}$ is graphic if there is a 0-1 binary matrix with 0's on the main diagonal such that the sum of the ith row is $a_{i}$ and ith column is $b_{i}$ for all $i = [1..n]$.  We say a bidegree sequence $\vec{d}\in\mathbb{N}_0^{(n,2)}$ is graphic with loops if there is a 0-1 binary matrix such that the sum of the ith row is $a_{i}$ and ith column is $b_{i}$.  We call $\vec{a}$ our in-degree sequence and $\vec{b}$ our out-degree sequence.  
\end{defn}

\textcolor{black}{Note that when it exists, the 0-1 binary matrix in Definition \ref{def:graphic} arises naturally as the adjacency matrix for the digraph with degrees given by $\vec{a},\vec{b}$. In this matrix, the $(i,j)$ element is 1 if the digraph includes an edge from node $j$ to node $i$ and a 0 if it does not.  To distinguish graphicality for digraphs from that for graphs, one might refer to the statement of Definition \ref{def:graphic} as defining what it means for $\vec{d}$ to be digraphic.  For simplicity, we shorten this to graphic since we do not  focus on undirected graphs in this paper.}

According to a classic theorem, we can verify the graphicality of a bidegree sequence by checking $n$ inequalities.  
{\color{black}Throughout the rest of the paper, we assume that $\sum_{i=1}^{n}a_{i}=\sum_{i=1}^{n}b_{i}$, based on the necessity of this equality for graphicality.}

\begin{thm} 
\label{thm:GR}
(Gale-Ryser/Fulkerson \cite{Gale57,Ryser57,Fulkerson60,Chen66})  Consider a bidegree sequence $\vec{d}=(\vec{a},\vec{b})$ where the $a_{i}$ are nonincreasing.   $\vec{d}$ is graphic with loops if and only if 
\begin{equation}
\label{eq:equalsum}
\sum_{i=1}^{n}a_{i}=\sum_{i=1}^{n}b_{i}
\end{equation}
 and for all $j\in [1..n-1]$,
$$\sum_{i=1}^{n}\min(b_{i},j)\geq \sum_{i=1}^{j}a_{i}.$$ Similarly, $\vec{d}$ is graphic if and only if (\ref{eq:equalsum}) holds and $\forall j\in [1..n-1]$, $$\newline\sum_{i=1}^{j}\min(b_{i},j-1)+\sum_{i=j+1}^{n}\min(b_{i},j)\geq \sum_{i=1}^{j}a_{i}.$$
\end{thm}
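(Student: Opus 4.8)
The plan is to prove both equivalences simultaneously through a max-flow--min-cut argument, so that the loop-free constraint is handled by only a small modification of the network. Necessity of the equal-sum condition (\ref{eq:equalsum}) is immediate, since both sides count the total number of $1$'s in a realizing matrix. For necessity of the inequalities, fix $j$ and count the $1$'s lying in the first $j$ rows in two ways: summing over rows gives $\sum_{i=1}^j a_i$, while summing over columns and noting that column $k$ can contribute at most $\min(b_k,j)$ such $1$'s (at most $j$ rows, at most $b_k$ in all) gives the ``with loops'' bound. In the loop-free case the forced $0$ at position $(k,k)$ reduces the contribution of column $k$ to $\min(b_k,j-1)$ when $k\le j$ and leaves it at $\min(b_k,j)$ when $k>j$, yielding the second inequality.

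For sufficiency I would build the bipartite flow network with source $s$, row nodes $R_1,\dots,R_n$, column nodes $C_1,\dots,C_n$, and sink $t$, with capacities $a_i$ on $s\to R_i$, capacity $1$ on each $R_i\to C_k$, and $b_k$ on $C_k\to t$; for the loop-free case the edges $R_i\to C_i$ are simply deleted. By the integrality theorem for flows with integer capacities, an integral flow of value $\sum_i a_i$ exists exactly when the desired matrix can be realized: reading $M_{ik}$ off the middle edge $R_i\to C_k$, saturation of the source and sink edges forces the prescribed row sums $a_i$ and column sums $b_i$, and the deleted diagonal edges force $M_{ii}=0$. The cut with no nodes on the source side has capacity $\sum_i a_i$, so it remains to show every cut has capacity at least $\sum_i a_i$.

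The core computation is the min-cut analysis. Writing a cut as $(A,B)$ with $A$ the set of row nodes and $B$ the set of column nodes on the source side, its capacity is $\sum_{R_i\notin A} a_i + \sum_{C_k\in B} b_k$ plus the number of surviving middle edges from $A$ to the complement of $B$. Because the $a_i$ are nonincreasing, for fixed $|A|=j$ the worst case is $A=\{1,\dots,j\}$, and the optimal $B$ is obtained by including each column according to the sign of its marginal contribution $b_k-j$ to the capacity. In the ``with loops'' case this collapses the requirement to $\sum_{k}\min(b_k,j)\ge\sum_{i=1}^j a_i$, using the identity $jn+\sum_{k}\min(b_k-j,0)=\sum_k\min(b_k,j)$.

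The hard part is the loop-free version of this step. Deleting the diagonal edges makes the surviving-edge count depend on $\lvert B\cap\{C_1,\dots,C_j\}\rvert$, so the first $j$ columns and the remaining $n-j$ columns must be optimized separately, against thresholds $j-1$ and $j$ respectively. I expect the main obstacle to be checking that the leftover terms linear in $j$ cancel, so that the minimized cut reduces exactly to $\sum_{k\le j}\min(b_k,j-1)+\sum_{k>j}\min(b_k,j)$; this is precisely where the bookkeeping of the diagonal enters and what distinguishes the two inequalities in the statement. Finally I would confirm that the endpoints $j=0$ and $j=n$ contribute only the equal-sum condition, so that the effective constraints are exactly those for $j\in[1..n-1]$.
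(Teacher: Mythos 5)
First, a point of comparison: the paper does not prove Theorem~\ref{thm:GR} at all --- it is quoted as a classical result with citations, and the surrounding text explicitly attributes the loop-free (``graphic'') half to Anstee's revision of the Fulkerson/Chen theorem, noting that the original required a \emph{stronger ordering} of the degree sequence. So there is no in-paper argument to compare against, and your proposal must stand on its own. Your necessity argument and the with-loops sufficiency via max-flow/min-cut are sound and standard (this is essentially Gale's original route): in the with-loops network the minimized-over-$B$ cut capacity depends on the row set $A$ only through $\sum_{i\in A}a_i$, so the reduction to the prefix $A=\{1,\dots,j\}$ is legitimate there.

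The genuine gap is in the loop-free sufficiency, and it is not where you locate it. After optimizing over $B$, the capacity of a cut with row set $A$, $|A|=j$, is
\[
\sum_{i\notin A}a_i+\sum_{k\in A}\min(b_k,j-1)+\sum_{k\notin A}\min(b_k,j)
=\sum_{i\notin A}a_i+\sum_{k=1}^{n}\min(b_k,j)-\#\{k\in A:\ b_k\ge j\},
\]
so minimizing over $A$ with $|A|=j$ requires \emph{maximizing} $\sum_{i\in A}a_i+\#\{k\in A:\ b_k\ge j\}$, not merely $\sum_{i\in A}a_i$. With only the $a_i$ nonincreasing, the prefix need not achieve this maximum: if $a_j=a_{j+1}$ but $b_j<j\le b_{j+1}$, swapping $j$ for $j+1$ in $A$ yields a strictly smaller cut than the prefix does, so verifying the stated inequalities for prefixes does not immediately bound all cuts. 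This is precisely why the classical Fulkerson/Chen statement imposed the extra ordering (ties in $a_i$ broken by nonincreasing $b_i$, under which the prefix is genuinely worst) and why Anstee's version --- the one the paper states --- requires a further argument showing that the prefix inequalities imply the inequalities for arbitrary $A$, e.g.\ via an exchange argument within blocks of equal $a_i$. The ``hard part'' you flag, the linear bookkeeping of the deleted diagonal, is routine; the missing idea is the justification of the reduction to $A=\{1,\dots,j\}$ in the loop-free network.
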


\textcolor{black}{The final part of the theorem cited above is in fact a revision of the classical Gale-Ryser/Fulkerson Theorem due to Antsee \cite{Antsee82}, whereas the original had required a stronger ordering in the degree sequence \cite{Chen66}.  More recent ammendations of the  Gale-Ryser/Fulkerson Theorem do exist and can be found, for example, in Berger \cite{Berger11} and Miller \cite{Miller13}.}  Miller capitalizes on the discrete ``concavity" in $j$ of the functions on the left and right hand sides of the Gale-Ryser inequalities to derive the stronger result.  Analogously, we will also exploit the ``concavity" in the inequalities to construct improved sufficient conditions for graphicality.

{\color{black}We have two motivations for constructing novel sufficient conditions for graphicality.  First, determining whether a bidegree sequence is indeed graphic from the $n$ inequalities in Theorem \ref{thm:GR} is conceptually cumbersome.  Inspection of a given degree sequence provides little intuition as to whether it is possible to construct a graph that realizes that degree sequence.  Second, verifying the $n$ inequalities in Theorem \ref{thm:GR} directly can also be computationally inefficient.  Suppose we want to construct an ensemble of networks with different bidegree sequences for the purpose of evaluating the impact of properties of the network (the bidegree sequence) on the dynamics of the system as in \cite{zhao11}.  Naturally, to complete such an analysis, we would perform a parameter sweep over various statistics of the bidegree sequence (such as the maximum, average, and minimum degree).   While linear time algorithms exist for such computations \cite{Hakimi65,Kleitman73,Erdos10}, if we knew the maximum (average degree, minimum) of our degree sequence, we would have an $O(1)$ check for graphicality as opposed to an $O(n)$ check.  Since we would want to sample {\em many} different bidegree sequences, using a linear time check would be inefficient.     For example, generating a large graph using the methods adopted by Kim et al. \cite{Kim12} requires taking a node from the graph and identifying all wirings of its outward edges that can lead to a digraph without multi-edges.  To do so, one must check graphicality of the residual bidegree sequence many times; avoiding this step by utilizing bounds on degrees that ensure graphicality could help speed up the run time of the code.}

We therefore aim to construct a theoretical result that guarantees graphicality based on various easily computable attributes (the mean, the minimum) of a bidegree sequence. 
The results of several past works  \cite{Z92,Alon10,Cairns14}  provide the following sufficient condition for graphicality in terms of the maximum and minimum values in a bidegree sequence, where $\lfloor x \rfloor$ is defined as the integer floor of $x$. 

\begin{thm}(Zverovich and Zverovich, Alon et al., and Cairns et al.) Consider a bidegree sequence $\vec{d}=(\vec{a},\vec{b})\in\mathbb{N}_0^{(n,2)}$ where $\vec{a}=\vec{b}$, $m=\min\vec{d}$ and $M=\max\vec{d}$.  If $\left \lfloor \frac{(m+M)}{4}^{2}\right\rfloor\leq mn$, then $\vec{d}$ is graphic \color{black}{with loops}.  
\end{thm}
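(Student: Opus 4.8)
The plan is to invoke the ``graphic with loops'' half of the Gale--Ryser/Fulkerson theorem (Theorem \ref{thm:GR}). Since $\vec a=\vec b$, the equal-sum condition \eqref{eq:equalsum} holds automatically, and after relabeling nodes we may assume $a_1\ge a_2\ge\cdots\ge a_n$, so that $b_i=a_i$ is also nonincreasing. It then remains to verify, for every $j\in[1..n-1]$, the single family of inequalities $\sum_{i=1}^n\min(a_i,j)\ge\sum_{i=1}^j a_i$. I would set $g(j):=\sum_{i=1}^j a_i-\sum_{i=1}^n\min(a_i,j)$ and show $g(j)\le 0$ for all such $j$; the goal is to extract the worst case over $j$ and match it to the hypothesis.

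Next I would rewrite $g(j)$ in a form that isolates where the two sides differ. Splitting the capped sum at index $j$ gives
\[
g(j)=\sum_{i=1}^j\bigl(a_i-\min(a_i,j)\bigr)-\sum_{i=j+1}^n\min(a_i,j)=\sum_{i=1}^j\max(a_i-j,0)-\sum_{i=j+1}^n\min(a_i,j).
\]
Using only the bounds $m\le a_i\le M$, each of the first $j$ terms is at most $\max(M-j,0)$ and each of the last $n-j$ terms is at least $\min(m,j)$, so $g(j)\le j\max(M-j,0)-(n-j)\min(m,j)$.

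The core of the argument is then a case analysis in $j$ against this elementary upper bound. When $j\ge M$ the first term vanishes and $g(j)\le 0$ immediately. When $m\le j<M$ the bound becomes $j(M+m)-j^2-nm$, a downward parabola in $j$ whose maximum over the reals is $(M+m)^2/4$, attained at $j=(M+m)/2$; since $g(j)$ is an integer and $nm$ is an integer, this yields $g(j)\le\lfloor (M+m)^2/4\rfloor-nm\le 0$ by hypothesis. The remaining range $j<m$ reduces to $g(j)\le j(M-n)$, which is nonpositive provided $M\le n$; I would obtain $M\le n$ from the hypothesis itself, since AM--GM gives the integer bound $mM\le\lfloor(m+M)^2/4\rfloor\le mn$ (and this range of $j$ is vacuous when $m=0$).

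I expect the main obstacle to be bookkeeping rather than depth: correctly organizing the three ranges of $j$, and in particular handling integrality so that the real-valued parabola maximum $(M+m)^2/4$ can legitimately be replaced by its floor without weakening the inequality. The one genuinely non-routine point is recognizing that the hypothesis does double duty --- it controls the dominant $m\le j<M$ regime through the parabola, and simultaneously guarantees the feasibility condition $M\le n$ needed to dispatch the small-$j$ regime.
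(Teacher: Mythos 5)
Your argument is correct, but there is nothing in the paper to compare it against: Theorem 2 is stated as an imported result, attributed to Zverovich--Zverovich, Alon et al., and Cairns et al., and the paper gives no proof of it (the first result the paper actually proves is Theorem \ref{thm:basic}). Taken on its own terms, your derivation holds up: the decomposition $g(j)=\sum_{i=1}^j\max(a_i-j,0)-\sum_{i=j+1}^n\min(a_i,j)$ is valid, the termwise bounds using only $m\le a_i\le M$ are correct, the integrality step legitimately converts the real parabola maximum $(M+m)^2/4$ into its floor because $j(M+m)-j^2-nm$ is itself an integer, and the observation that the hypothesis also forces $M\le n$ (via $mM\le\lfloor(m+M)^2/4\rfloor\le mn$ when $m\ge 1$, with the $j<m$ range vacuous when $m=0$) closes the small-$j$ case. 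Your approach is in the same spirit as what the paper does for its own theorems --- bound $\sum_{i=1}^j a_i$ above by a linear function of $j$ and $\sum_i\min(b_i,j)$ below by a worst-case sequence, then compare --- though the paper formalizes the comparison through the concavity Lemma 1 and an explicit minimizing vector $\vec{b}^*$, whereas you handle the worst case by direct termwise estimates and an explicit three-way case split in $j$. One small point worth noting in a write-up: the Gale--Ryser criterion as stated requires the $a_i$ to be nonincreasing, and since $\vec a=\vec b$ a single relabeling sorts both simultaneously, which is exactly why the hypothesis $\vec a=\vec b$ is doing real work here.
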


{\color{black}The above theorem is helpful in the sense that it provides a simple criterion for determining whether there exists a digraph corresponding to a given bidegree sequence.  Unfortunately, there are in fact bidegree sequences (where $\sum a_{i} = \sum b_{i}$) that do not satisfy the conditions and are still graphic, including of course graphic sequences with $\vec{a} \neq \vec{b}$. } 

Historically, the above theorem was relevant to the problem of showing that the likelihood that a degree sequence for an undirected graph can produce a graph vanishes as $n\rightarrow\infty$ (and an analogous results holds with respect to directed graphs for a bidegree sequence that has equal in- and out-degree sums and is otherwise unconstrained).  The constraint on the maximum of the bidegree sequence in the above theorem suggested that the probability of graphicality would approach zero in this limit, since excessive growth of $M$ (e.g., proportional to $n$) with increasing $n$ would  violate the graphicality {\color{black} condition \cite{Erdos93,Barnes95,Barnes97,Nolan98}.   Ultimately, a result from Pittel \cite{Pittel99} provides a proof of this asymptotic result.  

Overall, identifying graphic degree sequences is a nontrivial problem and constructing improved sufficient conditions
for graphicality can help ease this difficulty.}

By incorporating an additional quantity, the mean number of edges of the nodes in a graph, we can prove a \textcolor{black}{refinement of Theorem 2 (Theorem 5 below), which is a sharp refinement even when $\vec{a}=\vec{b}$.}
Before doing so, however, we state and prove some intermediate results that help us build up to the ultimate results in the paper and that are already stronger than Theorem 2 under certain conditions.

{\color{black}As a final note, all of the results in this paper immediately extend to graphicality results for bipartite graphs, since every bipartite graph
can be represented as a $0-1$ rectangular binary matrix.  We can extend any rectangular binary matrix  as a square binary matrix by adding rows (or columns) 
of $0's$.  Since there is a one-to-one correspondence between digraphs (with loops) and square $0-1$ binary matrices, any sufficient conditions
that guarantee existence for digraphs carry over to bipartite graphs as well.}

 \section{Theoretical Results}
 
 To start, we prove the following Theorem, which considers the maximum of the in-degree and the maximum of the out-degree as two separate parameters.
\begin{thm} \label{thm:basic} Consider a bidegree sequence $\vec{d}=(\vec{a},\vec{b})\in\mathbb{N}_0^{(n,2)}$ {\color{black}where the entries of $\vec{a}$ are arranged in non-increasing order and assume that $\sum_{i=1}^{n}{a_{i}}=\sum_{i=1}^{n}{b_{i}}:=n\bar{c}$ where $\bar{c}$ is the average degree.}  
If  $\max a_{i}= M_{a}$ and  $\max b_{i}= M_{b},$ where \textcolor{black}{$M_{a}M_{b}\leq n\bar{c}+1$}, then $\vec{d}$ is graphic with loops.  In particular, in the special case where $M_{a}=M_{b}$, if \textcolor{black}{$\max \vec{d}\leq \left\lfloor \sqrt{n\bar{c}+1}\right\rfloor$}, then $\vec{d}$ is graphic with loops.
\end{thm}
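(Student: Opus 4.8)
The plan is to reduce graphicality to the Fulkerson form of the Gale--Ryser inequalities from Theorem~\ref{thm:GR}. Since we assume $\sum_{i=1}^{n} a_i = \sum_{i=1}^{n} b_i = n\bar c$, condition~(\ref{eq:equalsum}) holds for free, so it remains only to verify that $\sum_{i=1}^{n} \min(b_i,j) \ge \sum_{i=1}^{j} a_i$ for every $j \in [1..n-1]$. I would then obtain the special case $M_a = M_b$ as an immediate corollary: the hypothesis $\max\vec d \le \lfloor\sqrt{n\bar c+1}\rfloor$ forces $(\max\vec d)^2 \le n\bar c + 1$, which is exactly $M_a M_b \le n\bar c + 1$, so it suffices to establish the general statement.

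The main argument splits on the size of $j$ relative to $M_b$. When $j \ge M_b$, every $b_i \le M_b \le j$, so $\min(b_i,j) = b_i$ and the left-hand side saturates at $\sum_{i=1}^{n} b_i = n\bar c$; since the $a_i$ are nonnegative, the partial sum $\sum_{i=1}^{j} a_i$ never exceeds $n\bar c$, and the inequality holds trivially. This reflects the ``concavity''/saturation structure the introduction alludes to: the left-hand side is nondecreasing in $j$ and flattens once $j$ passes $M_b$. The substantive case is $j < M_b$. Here I would use the pointwise estimate $\min(b_i,j) \ge \tfrac{j}{M_b} b_i$, valid for each $i$ (checking $b_i \le j$ and $b_i > j$ separately, using $j < M_b$ in the first case and $b_i \le M_b$ in the second). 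Summing gives $\sum_{i=1}^{n} \min(b_i,j) \ge \tfrac{j}{M_b}\, n\bar c$, while monotonicity of $\vec a$ gives $\sum_{i=1}^{j} a_i \le j M_a$.

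The delicate step --- the one I expect to be the only real obstacle --- is that a naive comparison of these two bounds yields the inequality only under $M_a M_b \le n\bar c$, losing the extra $+1$ in the hypothesis. I would recover it through integrality. From $M_a M_b \le n\bar c + 1$ and $j < M_b$ we get $j M_a \le \tfrac{j(n\bar c + 1)}{M_b} = \tfrac{j n\bar c}{M_b} + \tfrac{j}{M_b} < \tfrac{j n\bar c}{M_b} + 1 \le \sum_{i=1}^{n} \min(b_i,j) + 1$, where the strict inequality is precisely where $j/M_b < 1$ is invoked. Hence $\sum_{i=1}^{j} a_i \le j M_a < \sum_{i=1}^{n} \min(b_i,j) + 1$, and since both outer quantities are integers this forces $\sum_{i=1}^{j} a_i \le \sum_{i=1}^{n} \min(b_i,j)$, completing the verification of the Gale--Ryser/Fulkerson condition and thus the proof.
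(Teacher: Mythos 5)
Your proof is correct, but it takes a genuinely different route from the paper's. The paper proves the theorem by constructing the explicit worst-case out-degree sequence $\vec{b}^{*}$ (with $b^*_1=\dots=b^*_{M_a-1}=M_b$ and $b^*_{M_a}\geq M_b-1$) that simultaneously minimizes $\sum_{i=1}^{n}\min(b_i,j)$ over all admissible $\vec{b}$ for every $j$, using the discrete-concavity machinery of Lemma 1 and Corollary 1, and then checks the Gale--Ryser inequalities against that extremal sequence. You instead avoid any extremal construction: for $j<M_b$ you use the pointwise bound $\min(b_i,j)\geq \frac{j}{M_b}b_i$ (valid since $b_i\leq M_b$), sum it to get $\sum_{i=1}^{n}\min(b_i,j)\geq \frac{jn\bar{c}}{M_b}$, and recover the $+1$ slack in the hypothesis $M_aM_b\leq n\bar{c}+1$ by the strict inequality $j/M_b<1$ together with integrality of $jM_a$ and of $\sum_{i=1}^{n}\min(b_i,j)$. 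This is shorter and more elementary, and it quietly dispenses with the paper's separate handling of the edge case $M_a=1$ (needed there because the extremal construction degenerates). What the paper's heavier approach buys is reusability: the minimizer $\vec{b}^*$ and the concavity lemma are the workhorses of Theorems 4--6 and Corollary 5, whereas your averaging bound is tailored to this one statement. One small point worth making explicit in a final write-up: when $M_b=0$ the division by $M_b$ never occurs, since then no $j\in[1..n-1]$ satisfies $j<M_b$ and the saturation case covers everything.
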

We will prepare for the proof of the theorem with certain preliminary results.
\textcolor{black}{Before doing so, we want to point out that that the adjustment of the bounds needed to ensure graphicality (Theorem 4) rather than graphicality with loops (Theorem 3) is quite small.}  This should not be surprising as graphicality requires that the adjacency matrix have 0's on the main diagonal.  Since this restriction only affects $n$ of the $n^{2}$ entries in our adjacency matrix, it should have negligible impact in the limit of large $n$.  This concept appears again later in the paper in extending Theorem 5 to Theorem 6.  Thus, in both instances, after we prove a sufficient condition to ensure graphicality with loops, we will make a slight alteration to our sufficient condition and show that the new version guarantees the (slightly) stronger condition of graphicality.

Now, in the sufficient criteria in Theorem 2, for simplicity suppose that $\newline \left\lfloor \frac{(m+M)}{4}^{2}\right\rfloor=\frac{(m+M)}{4}^{2}$, such that $\frac{(m+M)}{4}^{2}\leq mn$ implies $M\leq \sqrt{4mn}-m\leq \sqrt{4mn}$.  We conclude that if $\bar{c}> 4m$, then Theorem 3 (with $M_{a}=M_{b}$), provides a more flexible criterion for graphicality than that given by Theorem 2. 

We also wish to differentiate Theorem 3 from the constraint provided by Chung and Lu \cite{Chung02}.   In their work, the probability of having an outgoing edge from node $j$ to node $i$ is given by a Bernoulli random variable $p_{ij}$, independent across choices of $i,j$, such that $p_{ij}=\frac{a_{i}b_{j}}{n\bar{c}}$ where $a_{i}$ is the in-degree of node $i$ and $b_{j}$ is the out-degree of node $j$.  Consequently, they require that $M_{a}M_{b}\leq n\bar{c}$ in order to ensure that the probabilities do not exceed $1$.  It is not at all obvious that this bound should translate into a sufficient condition for graphicality, and it can in fact be awkward for the Chung-Lu algorithm.   
Specifically, if $M_{a}M_{b}= n\bar{c}$, and there exists a node $i$ such that $a_{i}=M_{a}$, and a node $j$ such that $b_{j}=M_{b}$, then according to the Chung-Lu algorithm, the probability of constructing an edge between node $i$ and node $j$ is 1, which is not a natural choice  \cite{Squartini11}.

To begin the analysis, 
consider all bidegree sequences in $\mathbb{N}_0^{(n,2)}$ with maximum in-degree $M_{a}$, with maximum out-degree $M_{b}$, and with average degree $\bar{c}$, such that $n\bar{c}$ is the sum of the \textcolor{black}{in-degrees} and also the sum of the \textcolor{black}{out-degrees}.  To prove Theorem 3, we want to construct the worst possible scenario; that is, we want to identify the in-degree vector that for {\it each and every} $j$ maximizes $\sum_{i=1}^{j}a_{j}$,  and the out-degree vector that for {\it each and every} $j\in [1..n-1]$, minimizes  $\bold{F}(j,\vec{b}) := \sum_{i=1}^{n}\min(b_{i},j)$.  Once we verify that the $n$ inequalities still hold under this worst case scenario, we have consequently proved the theorem.  Since identifying the minimizer of $\bold{F}(j,\vec{b})$ is rather technical, we prove the result in the following Lemma and Corollary for clarity; Lemma 1 also follows from Lemma 2.3 in \cite{Miller13} with $a_k = \Psi(k)-\Phi(k)$ as defined below. Notice, however, that $\bold{F}(j,\vec{b})$ in Corollary \ref{cor:minimizer} is {\em not} defined as  $\sum_{i=1}^{n}\min(b_{i},j)$ and {\color{black} for completeness} we will show later in the proof of Theorem 3 that indeed 
\begin{equation}
\label{eq:Fdef}
\sum_{i=1}^{n}\min(b_{i},j)=\sum_{i=1}^{j}\#(b_{z}:b_{z}\geq i, 1\leq z\leq n).  
\end{equation}
\begin{lem}Let $\Phi:\mathbb{N}\rightarrow\mathbb{N}$, $\Psi:\mathbb{N}\rightarrow\mathbb{N}$, where $\Psi$ is a concave function; that is, $\nabla \Psi(j)=\Psi(j)-\Psi(j-1)$ is non-increasing in j.  {\color{black}Let $\gamma \in \mathbb{N}$.}  If $\nabla \Phi(j)=\Phi(j)-\Phi(j-1)=\gamma$ or $\nabla \Phi(j)=\gamma -1$ for all $j \in [\alpha..\beta]$, $\Phi(\alpha)\leq \Psi(\alpha)$ and $\Phi(\beta)\leq \Psi(\beta)$, then $\Phi(j)\leq \Psi(j)$,  for all $j\in [\alpha+1..\beta-1]$.
\end{lem}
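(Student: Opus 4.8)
The plan is to recast the statement in terms of the gap function $g(j) := \Psi(j) - \Phi(j)$, which is integer-valued (since $\Psi,\Phi$ map into $\mathbb{N}$) and satisfies $g(\alpha)\geq 0$ and $g(\beta)\geq 0$ by hypothesis. It suffices to show that the minimum of $g$ over the whole interval $[\alpha..\beta]$ is nonnegative, which gives the desired conclusion on $[\alpha+1..\beta-1]$ a fortiori. First I would let $j_0$ denote the \emph{largest} index in $[\alpha..\beta]$ at which $g$ attains its minimum, and argue by contradiction: if that minimum is nonnegative there is nothing to prove, and if $j_0\in\{\alpha,\beta\}$ the minimum is $g(j_0)\geq 0$, so the only case to rule out is $j_0\in[\alpha+1..\beta-1]$ with $g(j_0)<0$.

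In that remaining case I would extract two local slope conditions at $j_0$. Because $j_0$ is a minimizer, $g(j_0)\leq g(j_0-1)$, so $\nabla g(j_0)=\nabla\Psi(j_0)-\nabla\Phi(j_0)\leq 0$, giving $\nabla\Psi(j_0)\leq\nabla\Phi(j_0)\leq\gamma$. Because $j_0$ is the \emph{largest} minimizer (and interior), $g(j_0+1)>g(j_0)$ strictly, so $\nabla\Psi(j_0+1)>\nabla\Phi(j_0+1)\geq\gamma-1$; here I would invoke integrality of $\Psi$ to sharpen this strict inequality to $\nabla\Psi(j_0+1)\geq\gamma$. Combining with the concavity hypothesis, namely that $\nabla\Psi$ is non-increasing, yields $\gamma\leq\nabla\Psi(j_0+1)\leq\nabla\Psi(j_0)\leq\gamma$, hence $\nabla\Psi(j_0)=\gamma$, and therefore $\nabla\Psi(j)\geq\nabla\Psi(j_0)=\gamma$ for every $j\in[\alpha+1..j_0]$.

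The contradiction then follows quickly: for each $j\in[\alpha+1..j_0]$ we have $\nabla\Psi(j)\geq\gamma\geq\nabla\Phi(j)$, so $\nabla g(j)\geq 0$ and $g$ is non-decreasing on $[\alpha..j_0]$. Telescoping gives $g(j_0)\geq g(\alpha)\geq 0$, contradicting $g(j_0)<0$. I expect the main obstacle to be precisely the oscillation of $\nabla\Phi$ between the two values $\gamma$ and $\gamma-1$, which prevents $g$ from being concave and hence blocks a naive "concave functions are minimized at endpoints'' argument; the fix is to localize at the turning point $j_0$, where the non-increasing slope of $\Psi$ together with integer-valuedness forces $\nabla\Psi\geq\gamma$ on the entire initial segment $[\alpha+1..j_0]$, pinning $g$ above its endpoint value there regardless of how $\nabla\Phi$ fluctuates.
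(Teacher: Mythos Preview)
Your proof is correct. It is a close cousin of the paper's argument rather than an identical one: the paper fixes the \emph{first} index $k$ at which $\Phi(k)>\Psi(k)$, deduces from integrality that $\nabla\Psi(k)\leq\nabla\Phi(k)-1\leq\gamma-1\leq\min_j\nabla\Phi(j)$, and then uses concavity to propagate the inequality \emph{rightward} to contradict $\Phi(\beta)\leq\Psi(\beta)$. You instead fix the \emph{largest} minimizer $j_0$ of $g=\Psi-\Phi$, extract $\nabla\Psi(j_0+1)\geq\gamma$ from integrality and the strict increase at $j_0+1$, and propagate \emph{leftward} to contradict $g(\alpha)\geq 0$. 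Both routes hinge on the same two ingredients (integer increments plus monotone $\nabla\Psi$), but the paper's version is slightly shorter because a single crossing inequality suffices, whereas your choice of the largest minimizer requires combining information at both $j_0$ and $j_0+1$ before you can pin down $\nabla\Psi(j_0)=\gamma$.
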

\begin{proof}
Suppose that there exists a first contradiction such that $\Phi(k)>\Psi(k)$, for some $k$.  This implies that $\nabla \Phi(k)>\nabla \Psi(k)$ as $\Phi(k-1)\leq \Psi(k-1)$.  But since by assumption and concavity, $\nabla \Psi(j)\leq \nabla \Psi(k)\leq \nabla \Phi(k)-1\leq \min(\nabla \Phi(j))$ for all $j>k$, this implies that $ \Phi(j)>\Psi(j)$ for all $j>k$.  Since we assumed that $\Phi(\beta)\leq \Psi(\beta)$, we have arrived at a contradiction.
\end{proof}

\begin{cor}\textcolor{black}{For $\vec{b} \in \mathbb{N}_0^n$, let} $\bold{F}(j,\vec{b})=\sum_{i=1}^{j}\#(b_{z}:b_{z}\geq i, 1\leq z\leq n)$.  \textcolor{black}{Fix  $M \in \mathbb{N}$ and define the set $B_M$ of out-degree vectors as $B_M := \{ \vec{b} \in \mathbb{N}_0^n : \sum_{i=1}^{n}b_{i}=n\bar{c}$,  $\max_{i}b_{i}\leq M$, and $M \leq n\bar{c} \}$.}  Choose  $k\in\mathbb{N}$ with $k\leq n$ such that $kM\leq n\bar{c}$ and $(k+1)M>n\bar{c}$.  Define $\vec{b}^{*}$ as $b_{1}^*=\ldots=b_{k}^*=M$, $b_{k+1}^*=n\bar{c}-kM$ and $b_{l}^*=0$ for all $l>k+1$. Then under these assumptions, for every $\vec{b}\in B_M$, $\bold{F}(j,\vec{b}^{*})\leq \bold{F}(j,\vec{b})$ for each and every $j\in [1..n]$. 

\label{cor:minimizer}
\end{cor}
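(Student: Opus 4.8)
The plan is to apply Lemma 1 directly, taking $\Phi(j) := \mathbf{F}(j,\vec{b}^{*})$ and $\Psi(j) := \mathbf{F}(j,\vec{b})$ for an arbitrary but fixed $\vec{b}\in B_M$, and then to glue together two ranges of $j$. The first thing I would record is that for \emph{any} out-degree vector $\vec{b}$ the forward difference $\nabla\mathbf{F}(j,\vec{b}) = \mathbf{F}(j,\vec{b})-\mathbf{F}(j-1,\vec{b})$ is exactly the $j$th summand in the defining sum, namely $\#(b_{z}:b_{z}\geq j,\,1\leq z\leq n)$, and that this count is non-increasing in $j$. Hence both $\Phi$ and $\Psi$ are concave in the sense required by Lemma 1, and it remains only to check the increment hypothesis on $\Phi$ and to supply the two endpoint inequalities.

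For the increment condition I would compute $\nabla\mathbf{F}(j,\vec{b}^{*})=\#(b_{z}^{*}:b_{z}^{*}\geq j)$ explicitly from the definition of $\vec{b}^{*}$. Writing $r := n\bar{c}-kM$ and noting $0\leq r < M$ (this is precisely what the choice $kM\leq n\bar{c}<(k+1)M$ guarantees), the vector $\vec{b}^{*}$ has $k$ coordinates equal to $M$, one coordinate equal to $r$, and the rest $0$. Consequently $\nabla\mathbf{F}(j,\vec{b}^{*})=k+1$ for $1\leq j\leq r$ and $\nabla\mathbf{F}(j,\vec{b}^{*})=k$ for $r<j\leq M$. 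Thus on $[1..M]$ the increments of $\Phi$ take only the two adjacent values $k$ and $k+1$, so the hypothesis of Lemma 1 holds with $\gamma=k+1$ (the degenerate case $r=0$ gives the constant increment $k$, which is still covered).

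To supply the boundary data I would evaluate $\mathbf{F}$ at $\alpha=0$ and $\beta=M$. At $j=0$ the defining sum is empty, so $\Phi(0)=\Psi(0)=0$. At $j=M$, every $\vec{b}\in B_M$ satisfies $\max_{i}b_{i}\leq M$, so counting pairs $(i,z)$ with $1\leq i\leq M$ and $b_{z}\geq i$ yields exactly $\min(b_{z},M)=b_{z}$ pairs for each $z$, giving $\mathbf{F}(M,\vec{b})=\sum_{z}b_{z}=n\bar{c}$ and likewise $\Phi(M)=n\bar{c}$. In particular both endpoint inequalities $\Phi(0)\leq\Psi(0)$ and $\Phi(M)\leq\Psi(M)$ hold (with equality), so Lemma 1 applied on $[0..M]$ gives $\Phi(j)\leq\Psi(j)$ for all $j\in[1..M-1]$. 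For the remaining $j$ with $M\leq j\leq n$, the same capping observation shows every entry is already saturated, so $\mathbf{F}(j,\vec{b})=\sum_{z}b_{z}=n\bar{c}=\mathbf{F}(j,\vec{b}^{*})$ and equality holds. Taking the union of these two ranges covers all of $[1..n]$ and establishes $\mathbf{F}(j,\vec{b}^{*})\leq\mathbf{F}(j,\vec{b})$ throughout.

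The main obstacle I anticipate is not any one computation but making the reduction to Lemma 1 airtight: one must check that $k$ is well-defined and that $\vec{b}^{*}$ is genuinely an element of $B_M$ (this is where the standing hypothesis $M\leq n\bar{c}$ enters, forcing $k\geq 1$ and $r\geq 0$, and where the corollary's requirement $k\leq n$ keeps $\vec{b}^{*}$ within $\mathbb{N}_0^{n}$), and one must confirm that $\vec{b}^{*}$ is engineered precisely so that its difference sequence drops by at most one. The conceptual point is that concentrating the mass into full-$M$ coordinates is exactly what pins the increments into the two-value window $\{k,k+1\}$ demanded by the lemma; among all vectors of $B_M$ this maximally concentrated $\vec{b}^{*}$ realizes the flattest admissible concave profile anchored at the shared endpoints $(0,0)$ and $(M,n\bar{c})$, and Lemma 1 is the statement that such a profile lies weakly below every competing concave profile with the same endpoints.
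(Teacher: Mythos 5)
Your proposal is correct and follows essentially the same route as the paper: verify concavity of $\mathbf{F}(\cdot,\vec{b})$, observe that the increments of $\mathbf{F}(\cdot,\vec{b}^{*})$ lie in $\{k,k+1\}$ on $[1..M]$, match the two endpoints, and invoke Lemma 1, with the range $j>M$ handled by saturation. The only cosmetic difference is that you anchor the left endpoint at $j=0$ where both functions vanish, whereas the paper anchors at $j=1$ and notes $\mathbf{F}(1,\vec{b}^{*})\leq\mathbf{F}(1,\vec{b})$ directly; both choices are fine.
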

\begin{proof}
\textcolor{black}{Fix $M \in \mathbb{N}$.  Note that $\bold{F}(j,\vec{b})=\sum_{i=1}^{j}\#(b_{z}:b_{z}\geq i, 1\leq z\leq n)$ is concave in $j$ and $\bold{F}(M,\vec{b})=n\bar{c}$  for all $\vec{b} \in B_M$. For $\vec{b}^{*}$ as defined in the statement of the Corollary, it follows that $\bold{F}(1,\vec{b}^*) \leq \bold{F}(1,\vec{b})$ for all $\vec{b} \in B_M$. {\color{black}Note that there at most $k+1$ positive entries in the out-degree sequence $\vec{b}^*$ and $k$ of them are identical.  Consequently, for $j\leq M$, there are $k$ or $k+1$ entries in $\vec{b}^*$ with entries that equal or exceed $j$ and by definition $\nabla \bold{F}(j,\vec{b}^*)=\#(b_{z}:b_{z}\geq j, 1\leq z\leq n) \in \{ k,k+1\} $}.   Hence, applying Lemma 1 yields the desired result.}
\end{proof}

At this juncture, we now can prove \textbf{Theorem \ref{thm:basic}}.  

\begin{proof}   Consider a bidegree sequence $\vec{d}=(\vec{a},\vec{b})\in\mathbb{N}_0^{(n,2)}$ satisfying the assumptions of Theorem \ref{thm:basic}.  Let us use the out-degrees to construct an $n$ by $n$ matrix consisting only of zeros and ones (a Ferrers diagram).  For the $k$th column, starting with the first row, we write down a 1.  We continue writing 1's until the column sums to $b_{k}$ and let the remaining entries in the column be zero.  Denote the $k$th row sum as $Q_{k}$.  It follows algebraically that  $\sum_{i=1}^{n}\min(b_{i},j)=\sum_{i=1}^{j}\sum_{k=1}^{n}\bold{1}_{(b_{k}\geq i)}=\sum_{i=1}^{j}Q_{i}=\sum_{i=1}^{j}\#(b_{z}:b_{z}\geq i, 1\leq z\leq n)$.  

\textcolor{black}{Graphicality is trivial if  $M_a=1$ as long as $\sum_i a_i = \sum_i b_i$.  For $M_a>1$, we have proven that the  minimizer has out-degree sequence $\vec{b}^*$ such that $b^*_{1}=....=b^*_{M_{a}-1}=M_{b}$ (as $M_{a}M_{b}\leq n\bar{c}+1$).   Now, if $M_a M_b \leq n \bar{c}$, then $b^*_{M_a} = M_b$ as well.  On the other hand, if $M_a M_b = n \bar{c}+1$, then $b^*_{M_a} = n \bar{c} - (M_a-1)M_b = M_b-1$.  Hence, we are assured that $b^*_{M_a} \geq M_b-1$.}

\textcolor{black}{Consequently, for $j\leq M_{b}-1$, $ \sum_{i=1}^{j}a_{i}\leq jM_{a}$, as $\max a_{i}\leq M_{a}$, and furthermore, $jM_{a} = \sum_{i=1}^j \#(b_k^* \geq i) \leq \sum_{i=1}^{n}\min(b_{i},j)$.  Meanwhile, for all $j\geq M_{b}$,  $\sum_{i=1}^{n}\min(b_{i},j)=n\bar{c}$. Hence, by  Theorem \ref{thm:GR}, the result is proved. }  

\end{proof}

{\color{black}The sufficient condition in Theorem \ref{thm:basic} is the best we can do without knowing more information regarding our degree sequence, as 
illustrated in the following counterexample.}

\begin{cex}{\color{black}There exists a degree sequence $\vec{d}$ such that $\sum a_{i} = \sum b_{i} = n\bar{c}$ , $M_{a}M_{b} = n\bar{c} + 2,$
and $\vec{d}$ is not graphic with loops.}
\end{cex}
\begin{proof}
{\color{black}Consider a degree sequence with $M_a \geq 2, M_b>2$ where $b_{1}=...=b_{M_{a}-1}=M_{b}$ and $b_{M_{a}} = n\bar{c} - (M_{a}-1)M_{b} = M_{b} - 2$.
Furthermore let $a_{1}=...=a_{M_{b}-1}=M_{a}$.  Then it follows that this degree sequence is not graphic as,
$$\sum_{i=1}^{M_{b}-1}a_{i}=M_{a}(M_{b}-1) > \sum_{i=1}^{M_{b}-1}\#(b_{k}\geq i) = M_{a}(M_{b}-2) + (M_{a}-1) = M_{a}(M_{b}-1) - 1.$$}
\end{proof}
With a subtle but natural observation we can generalize Theorem 3 to the case where we prohibit loops and the bound will be remarkably similar.

\begin{thm} Consider a bidegree sequence $\vec{d}\in\mathbb{N}_0^{(n,2)}$ where $\sum{a_{i}}=\sum{b_{i}}=n\bar{c}$.  If $\max a_{i}\leq M_{a}$ and $\max b_{i}\leq M_{b}$, where $(M_{a}+1)M_{b}\leq n\bar{c}$, then $\vec{d}$ is graphic.  In particular, if $\max \vec{d} = M_{a}=M_{b} \leq \sqrt{\frac{1}{4}+n\bar{c}}-\frac{1}{2}$, then $\vec{d}$ is graphic.
\end{thm}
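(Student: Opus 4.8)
The plan is to verify the second (loopless) family of inequalities in Theorem \ref{thm:GR} directly, reusing the worst-case machinery already developed for Theorem \ref{thm:basic}. As there, I would first reduce each loopless inequality to a comparison against the minimizer $\vec{b}^{*}$ of $\mathbf{F}$ supplied by Corollary \ref{cor:minimizer}. Note that the hypothesis $(M_{a}+1)M_{b}\leq n\bar{c}$ forces $M_{b}\leq n\bar{c}$, so Corollary \ref{cor:minimizer} applies with $M=M_{b}$; moreover, since $\lfloor n\bar{c}/M_{b}\rfloor \geq M_{a}+1$, it guarantees that \emph{at least} $M_{a}+1$ entries of $\vec{b}^{*}$ equal $M_{b}$. (The degenerate cases $M_{a}=0$ or $M_{b}=0$ force $n\bar{c}=0$ and are trivial, and $(M_{a}+1)M_{b}\leq n\bar{c}\leq nM_{b}$ gives $M_{a}\leq n-1$, so the relevant indices exist.)

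The subtle but natural observation is an exact rewriting of the loopless left-hand side. Using identity (\ref{eq:Fdef}) together with $\min(b_{i},k)-\min(b_{i},k-1)=\mathbf{1}_{(b_{i}\geq k)}$, one obtains for every $k\in[1..n-1]$
$$\sum_{i=1}^{k}\min(b_{i},k-1)+\sum_{i=k+1}^{n}\min(b_{i},k)=\sum_{i=1}^{n}\min(b_{i},k)-\#\{\,i\leq k:b_{i}\geq k\,\}=\mathbf{F}(k,\vec{b})-\#\{\,i\leq k:b_{i}\geq k\,\}.$$
The correction term is bounded above by $k$, and, crucially, it does not depend on how the out-degrees are paired with the sorted in-degrees, so the estimate applies to the actual sequence regardless of its ordering. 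Hence the loopless inequality will follow once $\mathbf{F}(k,\vec{b})\geq \sum_{i=1}^{k}a_{i}+k$, and since $\sum_{i=1}^{k}a_{i}\leq kM_{a}$ it suffices to prove $\mathbf{F}(k,\vec{b})\geq k(M_{a}+1)$. This is exactly the with-loops estimate of Theorem \ref{thm:basic} with the in-degree bound raised from $M_{a}$ to $M_{a}+1$, which is why the resulting criterion $(M_{a}+1)M_{b}\leq n\bar{c}$ so closely mirrors $M_{a}M_{b}\leq n\bar{c}+1$.

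The remaining work splits on the size of $k$. For $k\leq M_{b}$, Corollary \ref{cor:minimizer} gives $\mathbf{F}(k,\vec{b})\geq \mathbf{F}(k,\vec{b}^{*})\geq k(M_{a}+1)$, because each of the first $k\leq M_{b}$ layers of $\vec{b}^{*}$ counts at least the $M_{a}+1$ entries equal to $M_{b}$ (this is precisely where the hypothesis is used); subtracting the crude bound $k$ then yields $\mathbf{F}(k,\vec{b})-\#\{i\leq k:b_{i}\geq k\}\geq kM_{a}\geq\sum_{i=1}^{k}a_{i}$. I expect the main obstacle to be the complementary range $k>M_{b}$, where the estimate $\#\{i\leq k:b_{i}\geq k\}\leq k$ is far too lossy and would destroy the argument. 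The resolution is the second half of the observation: once $k>M_{b}\geq\max_{i}b_{i}$, no out-degree can reach $k$, so the correction term is exactly $0$ and the loopless left-hand side collapses to $\mathbf{F}(k,\vec{b})=n\bar{c}\geq\sum_{i=1}^{k}a_{i}$. Together these cases establish all $n-1$ loopless inequalities, and graphicality follows from Theorem \ref{thm:GR}.

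Finally, the special case is immediate: setting $M_{a}=M_{b}=M$ turns $(M_{a}+1)M_{b}\leq n\bar{c}$ into $M^{2}+M\leq n\bar{c}$, and solving $M^{2}+M-n\bar{c}\leq 0$ gives $M\leq\sqrt{\tfrac{1}{4}+n\bar{c}}-\tfrac{1}{2}$.
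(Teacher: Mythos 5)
Your proposal is correct and follows essentially the same route as the paper: both reduce the loopless left-hand side to $\mathbf{F}(j,\vec{b})$ minus a correction of at most $j$, use the minimizer from Lemma 1/Corollary 1 (with $M_a+1$ entries equal to $M_b$, which is where $(M_a+1)M_b\leq n\bar{c}$ enters) to get $\mathbf{F}(j,\vec{b})\geq j(M_a+1)$ for $j\leq M_b$, and observe that for $j>M_b$ the minima disappear and the left-hand side collapses to $n\bar{c}$. Your exact identity via $\min(b_i,k)-\min(b_i,k-1)=\mathbf{1}_{(b_i\geq k)}$ is just a sharper phrasing of the paper's starred inequality, and your handling of the degenerate cases is a welcome extra care.
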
  
\begin{proof}

 First, we show that for $j \leq M_b$, the $j$th inequality from the 
 Gale-Ryser Theorem holds.   We have $\sum_{i=1}^{j}a_{i}\leq jM_{a}$ and 
 \[ jM_{a}=j(M_{a}+1)-j\leq_{*}\sum_{i=1}^{n}\min(b_{i},j)-j \leq \sum_{i=1}^{j}\min(b_{i},j-1)+\sum_{i=j+1}^{n}\min(b_{i},j).
 \]
   The starred inequality follows from applying Lemma 1 to minimize the sum $\newline \sum_{i=1}^{n}\min(b_{i},j)$ with respect to the constraint that max$(b_{i})\leq M_{b}$, where 
   \[  \sum_{i=1}^{n}\min(b_{i},j)=\sum_{i=1}^{j}\sum_{k=1}^{n}\bold{1}_{(b_{k}\geq i)}=\sum_{i=1}^{j}\#(b_{z}:b_{z}\geq i, 1\leq z\leq n),
   \]
     $M_{b}(M_{a}+1) \leq n\bar{c}$, and $\sum_{i=1}^{n}b_{i}=n\bar{c}$.  For the minimizing sequence thus obtained, $b_{1}^*=...=b_{M_{a}+1}^*=M_{b}$, as $M_{b}(M_{a}+1)\leq n\bar{c}$, and hence 
     \[ \sum_{i=1}^{n} \min (b_i^*,j) = j(M_a+1).
     \]

For $j\geq M_{b}+1$, we can eliminate the minimum functions, as now $j-1\geq M_{b}\geq b_{i}$ for all $i$.  Thus, 
\[
\sum_{i=1}^{j}\min(b_{i},j-1)+\sum_{i=j+1}^{n}\min(b_{i},j)=\sum_{i=1}^{n}b_{i},
\]
 and $\sum_{i=1}^{n}b_{i}\geq\sum_{i=1}^{j}a_{i}$ for $j\leq n$ as the $a_{i}$'s are nonnegative and by assumption $\sum_{i=1}^{n}a_{i}=\sum_{i=1}^{n}b_{i}$.

In the special case where $M_{a}=M_{b}$, it follows that $M_{a}=\left\lfloor\sqrt{\frac{1}{4}+n\bar{c}}-\frac{1}{2}\right\rfloor$, as this quantity is the largest integer that satisfies the inequality  $M(M+1)\leq n\bar{c}$. $\newline$ 
\end{proof}

For large graphs, Theorems 3 and 4 provide \textcolor{black}{bounds that ensure graphicality of a bidegree sequence while allowing for a relatively large maximal degree.}  However, for many graphs we also have information about a lower bound on in- and out-degrees.  Consequently, we aim to prove two types of extensions.  In one extension, Theorem 5, we assume that there is a nonzero minimum degree, which in turn enables us to construct a more flexible sufficient condition on the maximum degree to guarantee graphicality.  The other type of extension, given in Corollary 5, also exploits the working assumption of a minimum degree in order to allow a small set of exceptional degrees to exceed the bound  on the maximum proposed in Theorem 3 while maintaining graphicality.  To simplify the proof Theorem 5, we prove the following corollary, which has utility of its own in verifying graphicality of a degree sequence.  

Henceforth, we drop the notation $M_{a}$ and $M_{b}$ and \textcolor{black}{refer to the maximum value of the bidegree sequence as $M$, given by}
\[
 \textcolor{black}{ M = \max_i \{ \max_i a_i, \max_i b_i \}.}
 \] 

\begin{cor}{\color{black}Suppose that a bidegree sequence $\vec{d}\in\mathbb{N}_0^{(n,2)}$ has a maximum value $M<n$ and for the associated in-degree sequence, $\#(a_{i}=M)=k$, where $M\leq k$. Then $\vec{d}$ is graphic with loops.  More generally, if for some $k\in\mathbb{N}$,  both $M\leq k$ and $Mk\leq n\bar{c}$ hold, then $\vec{d}$ is graphic with loops.}
\end{cor}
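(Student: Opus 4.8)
The plan is to verify the Gale--Ryser inequalities of Theorem \ref{thm:GR} (in the graphic-with-loops form) for the worst-case out-degree sequence, using the minimizer supplied by Corollary \ref{cor:minimizer}. First I would arrange $\vec{a}$ in non-increasing order; the equal-sum condition \eqref{eq:equalsum} holds by hypothesis, so it remains only to show $\sum_{i=1}^{n}\min(b_{i},j)\geq\sum_{i=1}^{j}a_{i}$ for each $j\in[1..n-1]$. As in the proof of Theorem \ref{thm:basic}, I would rewrite the left-hand side via \eqref{eq:Fdef} as $\mathbf{F}(j,\vec{b})=\sum_{i=1}^{j}\#(b_{z}:b_{z}\geq i,\,1\leq z\leq n)$.

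Next I would reduce the first (displayed) claim to the general one: if $\#(a_{i}=M)=k$, then the $k$ maximal in-degrees alone contribute $kM$ to $\sum_{i}a_{i}=n\bar{c}$, forcing $Mk\leq n\bar{c}$; together with $M\leq k$ this is exactly the hypothesis of the general statement. So I assume $M\leq k$ and $Mk\leq n\bar{c}$ for some $k\in\mathbb{N}$ and split the range of $j$. For $j\geq M$, every $b_{i}\leq M\leq j$ gives $\min(b_{i},j)=b_{i}$, so $\mathbf{F}(j,\vec{b})=\sum_{i}b_{i}=n\bar{c}\geq\sum_{i=1}^{j}a_{i}$, the final inequality because the $a_{i}$ are nonnegative with total $n\bar{c}$.

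The substantive range is $j\leq M-1$, where I would invoke Corollary \ref{cor:minimizer}. Since $\max_{i}b_{i}\leq M$ and $M\leq Mk\leq n\bar{c}$, the vector $\vec{b}$ lies in $B_{M}$, so $\mathbf{F}(j,\vec{b})\geq\mathbf{F}(j,\vec{b}^{*})$ for the staircase minimizer $\vec{b}^{*}$ whose first $k'=\lfloor n\bar{c}/M\rfloor$ entries equal $M$. The hypotheses yield $M\leq k\leq k'$, so at least the first $M$ entries of $\vec{b}^{*}$ equal $M$; hence for every $i\leq j\leq M-1$ we have $\#(b_{z}^{*}\geq i)\geq M$, and thus $\mathbf{F}(j,\vec{b}^{*})\geq jM\geq\sum_{i=1}^{j}a_{i}$, the last step because each $a_{i}\leq M$. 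This confirms all the Gale--Ryser inequalities and hence graphicality with loops.

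The crux---and the main thing to get right---is keeping the two roles of $k$ distinct: the $k$ of the hypothesis (the count of maximal in-degrees, or an abstract index in the general form) versus the index $k'$ that Corollary \ref{cor:minimizer} attaches to the minimizing out-degree sequence. The argument hinges entirely on $k\leq k'$, which guarantees that the height-$M$ plateau of $\vec{b}^{*}$ is at least $M$ columns wide, so that $\mathbf{F}(j,\vec{b}^{*})$ dominates the largest conceivable in-degree partial sum $jM$ throughout $j<M$. I would finally note that the boundary value $j=M$ is cleanly absorbed into the first regime and that the standing hypothesis $M<n$ keeps every degree realizable as a row or column sum, so no feasibility obstruction arises when applying the Gale--Ryser criterion.
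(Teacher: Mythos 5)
Your proposal is correct, but it takes a longer road than the paper does. The paper's entire proof is the observation that the hypotheses give $M^{2}\leq Mk\leq n\bar{c}$, at which point Theorem \ref{thm:basic} (with $M_{a}=M_{b}=M$, so $M_aM_b\le n\bar c\le n\bar c+1$) applies immediately; the corollary is really just a repackaging of Theorem \ref{thm:basic} in terms of the multiplicity of the maximal in-degree. You instead inline the proof of Theorem \ref{thm:basic}: you re-run the Gale--Ryser verification, splitting at $j=M$, invoking Corollary \ref{cor:minimizer} on the range $j\le M-1$, and using $k\le k'=\lfloor n\bar c/M\rfloor$ to show the plateau of $\vec b^{*}$ is at least $M$ columns wide so that $\mathbf{F}(j,\vec b^{*})\ge jM\ge\sum_{i=1}^{j}a_i$. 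That chain is sound (your reduction of the first claim to the general one via $kM\le\sum_i a_i=n\bar c$ matches the paper's implicit logic, and your care in separating the hypothesis's $k$ from the minimizer's index is exactly the right thing to worry about), and it has the virtue of being self-contained; but the inequality $k\le k'$ you identify as the crux is precisely the statement $Mk\le n\bar c$ combined with $M\le k$, i.e., the one-line bound $M^2\le Mk\le n\bar c$ that lets the paper simply cite the already-proved theorem. In short: same mathematics, but you reprove a theorem you could have quoted.
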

\begin{proof}
{\color{black}It follows by assumption that $$M^{2}\leq Mk \leq n\bar{c}$$ and hence Theorem 3 applies.}
\end{proof}

An application of Corollary 2 provides us with a powerful check for graphicality with loops.  Indeed, suppose we have verified the first $k$ inequalities of the Gale-Ryser theorem where the maximum is large ($M>>\sqrt{n\bar{c}}$).  We can then look at the residual degree sequence where the residual maximum is much more friendly and construct a linear upper bound for the remaining inequalities based on the new maximum of the residual degree sequence to verify whether the remaining $n-k$ inequalities hold.

Before we move on to prove Theorem 5, we \textcolor{black}{make an adjustment to Corollary 2 to handle} graphicality without loops.

\begin{cor} Suppose that a bidegree sequence $\vec{d}\in\mathbb{N}_0^{(n,2)}$ has a maximum value $M<n$ and for the associated in-degree sequence,  $\#(a_{i}=M)=k$, where $M< k$. Then $\vec{d}$ is graphic.  More generally, if there exist $k,M\in\mathbb{N}$, such that $M<k$ and $Mk\leq n\bar{c}$, then $\vec{d}$ is graphic.
\end{cor}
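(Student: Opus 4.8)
The plan is to mirror the proof of Corollary 2, replacing the appeal to Theorem 3 (graphicality with loops) by an appeal to Theorem 4 (graphicality without loops), and to exploit the fact that the hypothesis here is the \emph{strict} inequality $M<k$ rather than $M\leq k$. The extra slack provided by strictness is exactly what is needed to pass from the with-loops bound $M^{2}\leq n\bar{c}$ to the without-loops bound $M(M+1)\leq n\bar{c}$ required by Theorem 4.

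First I would treat the general statement. Assuming $M<k$ and $Mk\leq n\bar{c}$, the strict inequality gives $M+1\leq k$, whence $M(M+1)\leq Mk\leq n\bar{c}$. Setting $M_{a}=M_{b}=M$ in Theorem 4, the condition $(M_{a}+1)M_{b}=M(M+1)\leq n\bar{c}$ is satisfied, so $\vec{d}$ is graphic.

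For the first, more concrete statement, I would reduce it to the general one by producing the inequality $Mk\leq n\bar{c}$ from the counting hypothesis $\#(a_{i}=M)=k$. Since exactly $k$ entries of the in-degree sequence equal the maximum $M$ and every entry is nonnegative, the in-degree sum satisfies $\sum_{i}a_{i}\geq Mk$; combined with $\sum_{i}a_{i}=n\bar{c}$ this yields $Mk\leq n\bar{c}$. Together with $M<k$ this is precisely the hypothesis of the general statement, so graphicality follows.

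I do not anticipate any genuine obstacle: the argument is a one-line chain of inequalities. The only point requiring care is the bookkeeping of strict versus non-strict inequality—using $M<k$ (so that $M+1\leq k$) is essential, since the non-strict $M\leq k$ would only recover $M^{2}\leq n\bar{c}$ and hence graphicality \emph{with} loops, exactly the weaker conclusion of Corollary 2.
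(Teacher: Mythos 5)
Your proposal is correct and matches the paper's proof essentially verbatim: the paper likewise observes that $M<k$ gives $M(M+1)\leq Mk\leq n\bar{c}$ and then invokes Theorem 4. Your extra remark deriving $Mk\leq n\bar{c}$ from the counting hypothesis $\#(a_{i}=M)=k$ is a detail the paper leaves implicit, but it is the same argument.
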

\begin{proof}
 {\color{black}
 The proof is analogous to the prior corollary, since the assumptions give $$M(M+1)\leq Mk \leq n\bar{c}$$ and application of Theorem 4 completes the proof.
 
 }
\end{proof}

We are now ready to prove the following sufficient condition on graphicality, and later we show that it is an asymptotically sharp refinement over the condition proven by Zverovich and Zverovich \cite{Z92}.
\begin{thm} Consider a bidegree sequence $\vec{d}=(\vec{a},\vec{b})\in\mathbb{N}_0^{(n,2)}$ where $\sum{a_{i}}=\sum{b_{i}}=n\bar{c}$ and $\min\vec{d}=m \textcolor{black}{\in [1..n]}$.  Define 
\begin{equation}
\label{eq:kstar}
k_{*}=m +\sqrt{m^{2}+n(\bar{c}-2m)}
\end{equation}
 and let $k=\left\lceil k_*\right\rceil$ if \textcolor{black}{$k_*$ is real} and $k=1$ otherwise. If 
 \begin{equation}
 \label{eq:M}
 M:=\max \vec{d}\leq \min(\left\lfloor n\frac{\bar{c}-m}{k}+m\right\rfloor,n),
 \end{equation}
   then $\vec{d}$ is graphic with loops.
\end{thm}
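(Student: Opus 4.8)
The plan is to verify directly the Gale--Ryser inequalities of Theorem \ref{thm:GR} in their ``graphic with loops'' form, namely $\mathbf{F}(j,\vec b):=\sum_{i=1}^{n}\min(b_i,j)\ge\sum_{i=1}^{j}a_i$ for each $j\in[1..n-1]$ (with the $a_i$ sorted non-increasingly). It suffices to produce an upper envelope for $\sum_{i=1}^{j}a_i$ and a lower envelope for $\mathbf{F}(j,\vec b)$ depending only on $n,\bar c,m,M$ and to check the former never exceeds the latter. Since every entry lies in $[m..M]$ with fixed sums, I would use $\sum_{i=1}^{j}a_i\le\min\!\big(jM,\;n\bar c-(n-j)m\big)$ --- the first term because $a_i\le M$, the second because the $n-j$ remaining in-degrees each contribute at least $m$. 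For the out-degrees, writing $\mathbf{F}(j,\vec b)=n\bar c-\sum_i(b_i-j)^{+}$, I would bound $\sum_i(b_i-j)^{+}$ using the chord inequality $(b_i-j)^{+}\le\frac{M-j}{M-m}(b_i-m)$, valid on $[m..M]$ because $(b_i-j)^{+}$ is convex in $b_i$ and the two sides agree at $b_i=m$ and $b_i=M$. Summing gives, for $m\le j\le M$, the bound $\mathbf{F}(j,\vec b)\ge n\bar c-(M-j)\,\frac{n(\bar c-m)}{M-m}$, while $\mathbf{F}(j,\vec b)=nj$ for $j\le m$ and $\mathbf{F}(j,\vec b)=n\bar c$ for $j\ge M$. (This is exactly the value Corollary \ref{cor:minimizer}, adapted to the extra constraint $\min_i b_i\ge m$, assigns to the minimizing sequence that concentrates all mass at $M$ and $m$.)

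With these envelopes the verification splits into three ranges. For $j\le m$ the requirement $jM\le nj$ is precisely $M\le n$, guaranteed by the $\min$ with $n$ in (\ref{eq:M}). For $j\ge M$ the inequality is automatic since $\mathbf{F}=n\bar c\ge\sum_{i=1}^{j}a_i$. All the difficulty concentrates in $m\le j\le M$, where both envelopes are piecewise linear in $j$ and cross at $q:=\frac{n(\bar c-m)}{M-m}$. A short monotonicity check of the relevant linear differences on $[m,q]$ and on $[q,M]$ shows the worst case occurs at $j=q$, collapsing the whole family to the single scalar inequality $q(M-q)\le m(n-q)$.

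It then remains to match this crossover condition to the stated threshold. Substituting $M=m+\frac{n(\bar c-m)}{q}$ and clearing denominators turns $q(M-q)\le m(n-q)$ into $q^{2}-2mq+(2mn-n\bar c)\ge 0$, a quadratic in $q$ whose larger root is exactly $k_{*}=m+\sqrt{m^{2}+n(\bar c-2m)}$ from (\ref{eq:kstar}); so the inequality holds as soon as $q\ge k_{*}$. The hypothesis (\ref{eq:M}) with $k=\lceil k_{*}\rceil$ gives $M-m\le n\frac{\bar c-m}{k}$, i.e. $q=\frac{n(\bar c-m)}{M-m}\ge k\ge k_{*}$, so the crossover inequality is satisfied and every Gale--Ryser inequality holds. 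When $k_{*}$ is not real the discriminant is negative, the quadratic is positive for all $q$, and the crossover inequality holds unconditionally; hence any admissible value (in particular $k=1$) works, explaining the definition of $k$ in that case.

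I expect the main obstacle to be the middle-range bookkeeping rather than the algebra: one must select the two envelopes so that, after the substitution $q=\frac{n(\bar c-m)}{M-m}$, the entire family of inequalities for $m\le j\le M$ reduces to the single condition at the crossover, and one must confirm monotonicity of the linear differences in both sub-ranges --- including reconciling the sub-case $q<m$, which under the hypothesis can occur only when $k_{*}$ is not real (since real $k_{*}\ge m$ forces $q\ge k_{*}\ge m$) and is then absorbed by the negative discriminant. Recognizing that this crossover condition is precisely the quadratic whose root is $k_{*}$ is the step that ties the argument to the bound in (\ref{eq:M}). Morally this is the same as peeling off the top $k$ coordinates and applying Corollary \ref{cor:minimizer}/Corollary 2 to the residual sequence, as anticipated in the remark following Corollary 2.
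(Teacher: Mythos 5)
Your proposal is correct and reaches the theorem by a genuinely different route from the paper's. The paper argues discretely: it constructs the explicit extremal out-degree sequence $\vec b^{*}$ ($k$ entries equal to $M$, one remainder entry $r$, the rest equal to $m$), certifies it as the minimizer of $\sum_{i=1}^{n}\min(b_i,j)$ via Lemma 1, reduces the first $k$ Gale--Ryser inequalities to the $k$th one, arrives at the same quadratic $R(k)=k^{2}-2mk+2nm-n\bar c\ge 0$ evaluated at the integer $k=\lceil k_{*}\rceil$, and then must treat the $(k+1)$st inequality separately with a second polynomial $S$ exploiting the remainder $r>m$ before closing the tail with another application of Lemma 1. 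You replace the discrete extremizer with a continuous relaxation: the chord bound $(b_i-j)^{+}\le\frac{M-j}{M-m}(b_i-m)$ yields the linear lower envelope $nm+q(j-m)$ with $q=\frac{n(\bar c-m)}{M-m}$, and your two-piece upper envelope $\min\bigl(jM,\,n\bar c-(n-j)m\bigr)$ --- whose second piece does the work of the paper's Lemma-1 tail argument --- crosses exactly at $q$, collapsing the whole family to $R(q)\ge 0$ at the real crossover $q\ge k\ge k_{*}$. This buys a complete elimination of the remainder bookkeeping (the polynomial $S$ and the $\delta$ case split); the cost is that the chord envelope is the fractional rather than the integral minimum of $\sum_i\min(b_i,j)$, which is harmless here because the hypothesis already forces $q\ge\lceil k_{*}\rceil$. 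The sub-cases you flag do close as you anticipate: if $q>M$ the upper envelope is $jM$ on all of $[m,M]$, the relevant difference is increasing, and the worst case moves to $j=m$, where the inequality is just $M\le n$; if $q<m$ (possible only when $k_{*}$ is not real) the upper envelope is $n\bar c-(n-j)m$ throughout $[m,M]$ and the difference is minimized at $j=M$, where it equals $(n-M)m\ge 0$. In a final write-up you should also dispose of the degenerate case $M=m$, where $q$ is undefined but the sequence is regular and trivially graphic with loops.
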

\begin{proof}

\textcolor{black}{
To start, suppose that a bidegree sequence has maximal degree $M$ given by (\ref{eq:M}) with $k_*, k$ as defined in the statement of the theorem.  
Note that  $n\bar{c} - M \leq Mk + (n-(k+1))m \leq n\bar{c} - m$.
We can thus consider the vector $\vec{b}^{*}$ such that $b^{*}_{1}=b^{*}_{2}=...=b^{*}_{k}=M$, $b^{*}_{k+1}=r$  and all other $b^{*}_{i}=m$, where we choose the remainder $r$ such that $kM+r+(n-(k+1))m = n\bar{c}$ and thus $m \leq r \leq M$.
Recall that $\bold{F}(j,\vec{b}) = \sum_{i=1}^n \min (b_i,j)$ and that we have an alternative representation of $\bold{F}(j,\vec{b})$ from (\ref{eq:Fdef}).
Since $\bold{F}(m,\vec{b})=nm$ for all $\vec{b}$ with minimum $m$, we can apply Lemma 1 to show that $\vec{b}^{*}$ is a minimizer of $\bold{F}$.}

At this stage, we \textcolor{black}{assume that $k_*$ is real, and we} would like to show that the first $k$ Gale-Ryser inequalities hold.
In fact, because of the nonzero minimum $m$, the first $m$ Gale-Ryser inequalities are trivially satisfied since $M\leq n$; in particular, in the special case of $m=n$, Theorem 5 is true.
So, the only case we need to consider here is the case when $m<k$ and $m<n$, which we henceforth assume.
As previously, $\sum_{i=1}^{j}a_{i}\leq jM=:V(j)$.  Since $V(j)$ is linear and $W(j):=\sum_{i=1}^n\min(b_{i},j)$ is concave, by Lemma 1, to verify the first $k$ inequalities of the Gale-Ryser Theorem, it suffices to show that $V(k)\leq W(k)$.
 
Therefore, 
we seek to verify that the $k$th inequality holds for our minimizing vector $\vec{b}^{*}$.

The definition of $r$ implies that the following two equivalent equations both hold:
 \begin{equation}\label{eq:max}kM+r-m+(n-k)m=n\bar{c}\iff kM+r=n\bar{c}-(n-k-1)m. 
 \end{equation}  
Using $r \geq m$ in (\ref{eq:max}), it follows that 
\begin{equation} \label{eq:ai}
\sum_{i=1}^{k}a_{i}\leq kM\leq n\bar{c}-(n-k)m.
\end{equation}
Furthermore, for $m<k$ and $m<n$, 
\begin{equation}
\label{eq:bi}
nm +k(k-m)\leq \sum_{i=1}^n \min(b_{i}^{*},k)\leq \sum_{i=1}^n\min(b_{i},k),
\end{equation} 
since the middle quantity is $k^2+\min(r,k)+(n-(k+1))m$ and $\min (r,k) \geq m$.

\textcolor{black}{Combining (\ref{eq:ai}) and (\ref{eq:bi}) implies that the first $k$ Gale-Ryser inequalities will be guaranteed to hold as long as $n\bar{c}-(n-k)m\leq nm +k(k-m)$ or, equivalently, as long as $$R(k)=k^{2}-2m k+2nm-n\bar{c}\geq 0.$$  
Note  that $R(k)\geq 0$ for $k\geq k_{*}$ where $k_{*}$ as defined in (\ref{eq:kstar}), which we have assumed for now to be real,  is the larger root of $R(k)$.
Unfortunately, $k_{*}$ does not have to be a natural number.  But for $k=\left\lceil k_{*} \right\rceil=k_{*}+z$, where $z:=k-k_{*}\in [0,1)$,     it follows that $R(k)\geq 0$. }
 
\textcolor{black}{We have now established that under this choice of $M$, the first $k$ Gale-Ryser inequalities hold, where $k$ may be equal to 1 or $k_*$ depending on whether or not $k_*$ is real.
We no longer assume that $k \neq 1$, and we next proceed to show that the remaining inequalities hold as well.}
Assume that we have a remainder $b^{*}_{k+1}=r>m$, and we wish to verify the $(k+1)$st inequality for our minimizing vector. 
 We will construct another polynomial, $S(\cdot)$, such that if the polynomial is nonnegative when evaluated at $(k+1)$, then the  $(k+1)$st inequality in the Gale-Ryser Theorem holds. 
 Furthermore we will show that for our choice of $k$, $S(u)\geq 0$ for $u\geq k$.

 It follows from equation (\ref{eq:max}) that $kM+r= n\bar{c}+m-(n-k)m$ and we would like to find \textcolor{black}{a condition on $k$ that ensures} that $n\bar{c}+m-(n-k)m \leq nm +k(k+1-m)+1\leq \bold{F}(k+1,\vec{b}^{*})$, where the $+1$ in the middle quantity is a lower bound on $r$.
We therefore define 
 $$S(k)=k^{2}+k(1-2m)+(2n-1)m-n\bar{c}+1,$$ with largest root
 $$k_{**}=m-\frac{1}{2}+\sqrt{m^{2}+n\bar{c}-2nm-\frac{3}{4}}$$
 (if the roots are positive), 
 and by an analogous argument to that used for $R(\cdot)$, it follows that for all $u\geq k_{**}$, $S(u)\geq 0$.
 By noting that $k\geq k_{*}>k_{**}$ \textcolor{black}{ (if $k_*$ is real)}, we have shown that 
 \[ \sum_{i=1}^{k+1}a_{i}\leq F(k+1,\vec{b}).
 \]

To finish off the proof, it remains to  verify the $\{k+2,k+3,....,n\}$ inequalities.  Define $\delta=1$ if $r>m$ and 0 otherwise.
Since  we have shown that $\sum_{i=1}^{k+\delta}a_{i}\leq \sum_{i=1}^{k+\delta}\min(b^{*}_{i},k+\delta)\leq \bold{F}(k+\delta,\vec{b})$ and we know that
$\sum_{i=1}^{n}a_{i}=\sum_{i=1}^{n}\min(b^{*}_{i},n)=\bold{F}(n,\vec{b})$, Lemma 1 guarantees that for all $j$ such that $k+\delta\leq j\leq n$,
$\sum_{i=1}^{j}a_{i}\leq \sum_{i=1}^{n}\min(b^{*}_{i},j)\leq \bold{F}(j,\vec{b})$.  Thus, the proof is complete. 
\end{proof}

Now we state the analogous result to show that a degree sequence is graphic.  We also provide a sketch of the proof, which follows similarly to the proof of Theorem 5,  and leave it to the reader to fill in the missing details.

\begin{thm} Consider a bidegree sequence $\vec{d}=(\vec{a},\vec{b})\in\mathbb{N}_0^{(n,2)}$ where $\sum{a_{i}}=\sum{b_{i}}=n\bar{c}$ and $\min\vec{d}=m$, where $m\leq n-1$.  Let $k_{*}=m+1+\sqrt{(m+1)^{2}+n(\bar{c}-2m)}$ and define $k=\left\lceil k_*\right\rceil$ if $k_*$ is real and $k=1$ otherwise.  If $$ \max \vec{d}\leq \min(\left\lfloor n\frac{\bar{c}-m}{k}+m\right\rfloor,n-1),$$ then $\vec{d}$ is graphic.  \end{thm}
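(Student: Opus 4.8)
The plan is to mirror the proof of Theorem 5 almost verbatim, the only structural change being that we now invoke the second (loopless) family of inequalities from Theorem 1 and absorb the extra diagonal term exactly as in the passage from Theorem 3 to Theorem 4. First I would reuse the candidate out-degree vector $\vec{b}^{*}$ with $b^{*}_{1}=\cdots=b^{*}_{k}=M$, $b^{*}_{k+1}=r$, and $b^{*}_{i}=m$ for $i>k+1$, where $r$ is fixed by $kM+r+(n-(k+1))m=n\bar{c}$; the cap $M\leq\lfloor n(\bar{c}-m)/k+m\rfloor$ guarantees $m\leq r\leq M$, and Corollary 1 together with Lemma 1 shows that $\vec{b}^{*}$ minimizes $W(j,\vec{b}):=\sum_{i=1}^{n}\min(b_{i},j)$ over admissible out-degree sequences with minimum $m$. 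The new ingredient, borrowed from the proof of Theorem 4, is the elementary identity
\[
\sum_{i=1}^{j}\min(b_{i},j-1)+\sum_{i=j+1}^{n}\min(b_{i},j)=W(j,\vec{b})-\#\{\,i\leq j:b_{i}\geq j\,\}\geq W(j,\vec{b})-j,
\]
so that the $j$th loopless Gale-Ryser inequality is implied by $\sum_{i=1}^{j}a_{i}\leq W(j,\vec{b}^{*})-j$.

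Next I would propagate this $-j$ correction through the two polynomial estimates of Theorem 5. For the first $k$ inequalities, pairing $\sum_{i=1}^{k}a_{i}\leq kM\leq n\bar{c}-(n-k)m$ with the diminished lower bound $W(k,\vec{b}^{*})-k\geq k^{2}-k+(n-k)m$ reduces the requirement to a quadratic nonnegativity in $k$ that replaces the $R(k)=k^{2}-2mk+2nm-n\bar{c}$ of Theorem 5. The clean way to package a sufficient condition is the mnemonic substitution $m\mapsto m+1$ applied to Theorem 5 everywhere except in the term $n(\bar{c}-2m)$; this produces the polynomial $k^{2}-2(m+1)k+2nm-n\bar{c}$, which lies at or below the sharp one for $k\geq 0$, so that its nonnegativity suffices, and whose larger root is exactly the stated $k_{*}=m+1+\sqrt{(m+1)^{2}+n(\bar{c}-2m)}$. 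Choosing $k=\lceil k_{*}\rceil$ (or $k=1$ when $k_{*}$ is not real) then yields nonnegativity, so the first $k$ loopless inequalities hold; the degenerate cases ($m$ near $n-1$, or $M$ so small that the first $m$ inequalities are automatic) are dispatched exactly as in Theorem 5.

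I would then handle the $(k+1)$st inequality via the loopless analogue of the polynomial $S$ from Theorem 5, using $\sum_{i=1}^{k+1}a_{i}\leq kM+r$ together with the lower bound $W(k+1,\vec{b}^{*})-(k+1)$; this yields a root $k_{**}$ with $k_{*}\geq k_{**}$, which is precisely the statement that the single choice $k=\lceil k_{*}\rceil$ simultaneously clears the first $k$ and the $(k+1)$st inequalities. Finally, since $\sum_{i=1}^{n}a_{i}=\sum_{i=1}^{n}b_{i}=n\bar{c}$ forces equality at $j=n$, one further application of Lemma 1 (via concavity of $j\mapsto\sum_{i=1}^{j}\min(b_{i},j-1)+\sum_{i=j+1}^{n}\min(b_{i},j)$) extends the verified inequalities to every $j\in[k+1..n]$. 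The sole change to the hypotheses relative to Theorem 5 is the cap $M\leq n-1$, which is simply the loopless bound on the maximum degree.

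The main obstacle is the off-by-one bookkeeping needed to justify that the single, deliberately conservative value $k_{*}=m+1+\sqrt{(m+1)^{2}+n(\bar{c}-2m)}$ dominates the sharper roots that actually arise. A direct computation gives, for the first-$k$ constraint, the root $m+\tfrac{1}{2}+\sqrt{(m+\tfrac{1}{2})^{2}+n(\bar{c}-2m)}$, and for the $(k+1)$st constraint the strictly smaller root $m+\sqrt{m^{2}+m+n(\bar{c}-2m)}$; one must verify that the stated $k_{*}$ exceeds the former, and hence both, so that $k=\lceil k_{*}\rceil$ is uniformly large enough to clear every loopless inequality. Checking this domination, together with confirming that the remainder bounds $m\leq r\leq M$ and the estimate $\min(r,k)\geq m$ survive the replacement of $\min(b_{i},j)$ by $\min(b_{i},j-1)$ in the first $j$ summands, is where the details left to the reader genuinely reside.
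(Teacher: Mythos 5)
Your proposal is correct and follows essentially the same route as the paper: reuse the minimizer $\vec{b}^{*}$ from Theorem 5, absorb the diagonal via the $-j$ correction to $\sum_{i}\min(b_{i},j)$, form the shifted polynomials whose larger roots are the stated $k_{*}$ and the smaller $k_{**}=m+\sqrt{m^{2}+m+n(\bar{c}-2m)}$, and close with Lemma 1. If anything, your bookkeeping is slightly more careful than the paper's sketch, which silently replaces the sharp root $m+\tfrac{1}{2}+\sqrt{(m+\tfrac{1}{2})^{2}+n(\bar{c}-2m)}$ by the dominating $k_{*}$ exactly as you justify explicitly.
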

\begin{proof}  
Analogously to the proofs of Theorem 5 and Corollary 3, to construct the desired sufficient condition on $M$, we want the following inequalities to hold:
$$\sum_{i=1}^{n}a_{i}\leq (M+1)j\leq \sum_{i=1}^{n}\min(b_{i},j).$$
Applying Lemma 1, it suffices to consider the case when $j=k$, where $\#(a_{i}=M)\leq k$.
However, we know that for the remainder $r$ in our usual minimizer construction, as given in equation (\ref{eq:max}), $r = n\bar{c}-Mk - m(n-k-1)$ or equivalently $kM+r-m+(n-k)m=n\bar{c}$, and consequently, 
$$kM+k\leq n\bar{c}-(n-k)m+k.$$
Additionally we know that $nm +k(k-m)\leq \sum_{i}\min(b_{i}^{*},k)\leq \sum_{i}\min(b_{i},k)$, where $\vec{b}^{*}$ is the same as in the proof of Theorem 5.  
We construct the polynomials $$R_{*}(k)=R(k)-k=k^{2}-2k(m+1)+2nm-n\bar{c}$$ and $$S_{*}(k)=S(k)-k-1=k^{2}-2m(k)+(2n-1)m-n\bar{c}.$$

Let $k_{*}$ and $k_{**}$ be the larger of the two roots of $R_{*}(k)$ and $S_{*}(k)$, respectively:

$$k_{*}=m+1+\sqrt{m^{2}+2m+1+n\bar{c}-2nm},$$

$$k_{**}=m+\sqrt{m^{2}+n\bar{c}+m-2nm}.$$

It follows that if 
$k>k_{*}$, then both $R_{*}(k)$ and $S_{*}(k)$ are nonnegative.  
As before define $k=\left\lceil k_{*}\right\rceil$.
Consequently, since $k(M-m)-nm\leq n\bar{c}$, we get the constraint that $M\leq \left\lfloor\frac{n\bar{c}-(n-k_{*})m}{k}\right\rfloor$.  This verifies the first $k$ or, if there is a remainder, $k+1$ inequalities of the Gale-Ryser Theorem.  As in the end of the proof of Theorem 5, invoking Lemma 1 will verify the remaining inequalities.
\end{proof}
Although the maximum value in Theorem 5 is easy to compute, it is not obvious if this bound is superior to both Theorem 3 (where $M_{a}=M_{b}$) and Theorem 2.  Therefore we provide the following proof of superiority.
\begin{cor}
Consider degree sequences with a fixed minimum degree $m$, fixed average degree $\bar{c}$ such that $\bar{c}>m$, where we allow the number of nodes, $n$, in the sequence to vary.  Notationally, for each  $J\in\{2,3,4,5,6\}$, we can define $H_J(n,m,\bar{c})$ such that each Theorem J shows that a bidegree sequence is graphic (with loops) if the maximum degree $M\leq H_{J}(n,m,\bar{c})$.  Then $\lim_{n\rightarrow\infty} \frac{H_{q}(n,m,\bar{c})}{H_{p}(n,m,\bar{c})}\geq 1$, for each $q\in \{5,6\}$ and $p \in \{2,3,4\} $.
\end{cor}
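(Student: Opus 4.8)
The plan is to read off from each theorem the explicit upper bound $H_J(n,m,\bar{c})$ it places on the maximum degree, treat $m$ and $\bar{c}$ as fixed, let $n\to\infty$, and compare leading-order asymptotics. From the discussion after Theorem 2, the condition $(m+M)^2/4\le mn$ gives $H_2=2\sqrt{mn}-m\sim 2\sqrt{m}\,\sqrt{n}$, while the $M_a=M_b$ forms of Theorems 3 and 4 give $H_3=\sqrt{n\bar{c}+1}$ and $H_4=\sqrt{n\bar{c}+\tfrac14}-\tfrac12$, both $\sim\sqrt{\bar{c}}\,\sqrt{n}$. Thus $H_2,H_3,H_4$ are all $\Theta(\sqrt{n})$ with explicit constants, and the corollary reduces to pinning down the growth rate of $H_5$ and $H_6$, which is governed by the integer $k=\lceil k_*\rceil$ entering (\ref{eq:M}). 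Since this depends on whether $k_*$ is real, I would split on the sign of $\bar{c}-2m$.

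\emph{Case $\bar{c}>2m$.} Here $m^2+n(\bar{c}-2m)\to+\infty$, so $k_*$ is real and $k_*\sim\sqrt{n(\bar{c}-2m)}$; the additive constants ($m$, $m+1$) and the difference between the two radicands in Theorems 5 and 6 are lower order, so $k\sim\sqrt{n(\bar{c}-2m)}$ in both cases, and the cap $\min(\cdot,n)$ is inactive for large $n$. Substituting into $n\frac{\bar{c}-m}{k}+m$ yields
$$ H_5,\,H_6 \sim \sqrt{n}\,\frac{\bar{c}-m}{\sqrt{\bar{c}-2m}}. $$
Both are $\Theta(\sqrt{n})$, so each ratio $H_q/H_p$ converges to the quotient of leading constants, and the floors and other $O(1)$ corrections drop out. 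Clearing denominators and squaring, the comparison against $H_3$ and $H_4$ reduces to $(\bar{c}-m)^2-\bar{c}(\bar{c}-2m)=m^2\ge 0$, while the comparison against $H_2$ reduces to $(\bar{c}-m)^2-4m(\bar{c}-2m)=(\bar{c}-3m)^2\ge 0$; both hold trivially, giving the required limits $\ge 1$.

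\emph{Case $\bar{c}\le 2m$.} Now $m^2+n(\bar{c}-2m)$ is eventually nonpositive (strictly negative when $\bar{c}<2m$), so for all large $n$ the quantity $k_*$ is not real and the definitions force $k=1$; when $\bar{c}=2m$ one instead gets the bounded constant $k_*=2m$. In every subcase $k$ stays bounded, so $H_5,H_6=n\frac{\bar{c}-m}{k}+m=\Theta(n)$ (even after the cap at $n$ or $n-1$), using $\bar{c}-m>0$ from the hypothesis. Since $H_2,H_3,H_4=\Theta(\sqrt{n})$, every ratio $H_q/H_p$ diverges to $+\infty$, which is certainly $\ge 1$.

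The bulk of the work is bookkeeping rather than depth: the genuine obstacle is organizing everything around the sign of $\bar{c}-2m$ and confirming that the lower-order terms—the floors, the additive $m$ and $-\tfrac12$, and the discrepancy between the two radicands in Theorems 5 and 6—wash out in the limiting ratios. The only quantitative step, the constant comparison in the case $\bar{c}>2m$, is disarmed by the observation that both critical differences factor as the perfect squares $m^2$ and $(\bar{c}-3m)^2$, which is precisely what forces the limits to be at least $1$ (with equality only in the degenerate situations $m=0$ or $\bar{c}=3m$).
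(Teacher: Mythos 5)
Your proposal is correct and follows essentially the same route as the paper: both split on the sign of $\bar{c}-2m$, observe that $k$ stays bounded (hence $H_5,H_6=\Theta(n)$) when $\bar{c}\le 2m$, and otherwise extract the leading constant $(\bar{c}-m)/\sqrt{\bar{c}-2m}$ and compare it to $\sqrt{\bar{c}}$ and $2\sqrt{m}$. Your perfect-square factorizations $m^2$ and $(\bar{c}-3m)^2$ are just a cleaner phrasing of the paper's verification that $1\ge 6a-9a^2$ at the critical point $a=1/3$.
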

\begin{proof}
We only prove the result for $H_{5}(n,m,\bar{c})$, although the proof for $H_{6}(n,m,\bar{c})$ is identical.
We break the analysis up into two cases.

\medskip

\noindent {\bf Case 1:} $\bar{c}-2m\leq 0$

\medskip

In this case, $k\leq 2m$, and our condition on the maximum is $O(n)$, which is far superior to $O(\sqrt{n})$.  
\medskip

\noindent {\bf Case 2:} $\bar{c}-2m>0$

\medskip

Since we are only interested in asymptotic analysis, it suffices to consider the case when \textcolor{black}{$k_{*}\in\mathbb{N}$ (so $k=k_{*}$)} and $H_{5}(n,m,\bar{c})=n\frac{\bar{c}-m}{k}+m\in\mathbb{N}$. 
Consequently, $$H_{5}(n,m,\bar{c})=n(\sqrt{m^{2}+n(\bar{c}-2m)}-m)\frac{\bar{c}-m}{n(\bar{c}-2m)}+m$$ 
$$=(\sqrt{m^{2}+n(\bar{c}-2m)}-m)\frac{\bar{c}-m}{(\bar{c}-2m)}+m$$

$$=\sqrt{n\frac{(\bar{c}-m)^{2}}{\bar{c}-2m}+m^{2}(\frac{\bar{c}-m}{\bar{c}-2m})^{2}}-m
(\frac{m}{\bar{c}-2m}).$$

Note that asymptotically for large $n$, fixed $m$ and $\bar{c}$, if $p=2$, 
then 
\[
\lim_{n\rightarrow\infty}\frac{H_{p}}{\sqrt{4mn}}=1,
\]
 while if $p\in\{3,4\}$, 
then 
\[
\lim_{n\rightarrow\infty}\frac{H_{p}}{\sqrt{\bar{c}n}}\leq1.
\]
So asymptotically, to demonstrate that Theorem 5 is indeed more powerful than Theorems 2-4, we want to show that $\frac{(\bar{c}-m)^{2}}{\bar{c}-2m}\geq\bar{c}$ and $\frac{(\bar{c}-m)^{2}}{\bar{c}-2m}\geq 4m$.  For the ensuing discussion, let $\bar{c}=x$, $m=y$, with $x>2y$ by assumption.

First consider $\frac{(x-y)^{2}}{x-2y}\geq x \iff x^{2}-2xy+y^{2}\geq x^{2}-2yx$, which is true always.
 Next, note that $\frac{(x-y)^{2}}{x-2y}\geq 4y\iff x^{2}-2xy+y^{2}\geq 4xy-8y^{2}\iff x^{2}\geq 6xy-9y^{2}$.  Since $x>0$, this inequality is equivalent to $1\geq 6(\frac{y}{x})-9(\frac{y}{x})^{2}$.  Using another change of variables, where $a=\frac{y}{x}$, we want to know when $1\geq 6a-9a^{2}$. Taking the derivative of the right hand side implies that the maximum value of the right hand side occurs at $a=\frac{1}{3}$.  Since $6(\frac{1}{3})-9(\frac{1}{9})=1$, we conclude that asymptotically, Theorem 5 is more powerful than Theorems 2-4.
\end{proof}

As a simple example, note that if $m=1$, $\bar{c}=4$, $n=10$, and $M=6$ then $k=6$ and
$M \leq \lfloor n(\bar{c}-m)/k + m \rfloor = 6$, so Theorem 5 holds, but $(m+M)^2/4 = \frac{49}{4}>12 > 10=mn$ so Theorem 2 fails. 

As a final comment regarding Theorem 5, while it is not surprising that we can sharpen the bounds on the maximum by including an additional parameter $(n\bar{c})$, corresponding to the total number of edges, it is not readily apparent why the bound would dramatically change from $O(\sqrt{n})$ to $O(n)$ as {\it two} times the minimum number of edges of a node approaches the average number of edges. 

We now conclude our results section with a corollary of Theorem 5 that yields a more flexible graphicality criterion in which the degrees of some nodes can exceed the upper bound mentioned in Theorem 5.
\begin{cor}
Consider a bidegree sequence $\vec{d}=(\vec{a},\vec{b})\in\mathbb{N}_0^{(n,2)}$ where $\sum_{i=1}^n{a_{i}}=\sum_{i=1}^n{b_{i}}=n\bar{c}$ and $\min\vec{d}=m$, with $m \leq n$ and $\max \vec{d}\leq n$.  Without loss of generality, take the $a_{i}$ to be arranged in non-increasing order. Assume that there exists an $R$ such that $\sum_{i=1}^{R}a_{i}= n\lambda$, and $\sum_{i=1}^{R}b_{i}\leq n\lambda$ where $\lambda < m$ and $n-n\frac{\lambda}{m}-R\geq 1$. 
Next, define $M=\max_{i\geq R}\max(a_{i},b_{i})$ and $k_{*}=m +\sqrt{m^{2}+n(\bar{c}-2m)+Rm}$.  Let $k=\left\lceil k_{*}\right\rceil$ if $k_{*}$ is real and $k=1$ otherwise. If $ M\leq \min(\left\lfloor \frac{n\bar{c}-nm-n\lambda+Rm}{k}+m\right\rfloor,n)$ and if either $k\leq M$ or $k\leq n-n\frac{\lambda}{m}-R$,  then $\vec{d}$ is graphic with loops.
\end{cor}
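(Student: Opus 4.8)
The plan is to verify the Gale--Ryser conditions for graphicality with loops from Theorem \ref{thm:GR}: the equal-sum condition holds by hypothesis, so it remains to check $\sum_{i=1}^{j}a_{i}\le \mathbf{F}(j,\vec b)=\sum_{i=1}^{n}\min(b_{i},j)$ for every $j\in[1..n-1]$. I would follow the template of the proof of Theorem 5, modified to carve off the block of exceptional high in-degree coordinates. As there, the first step is to pass to the worst-case out-degree vector $\vec b^{*}$ produced by Lemma 1 and Corollary \ref{cor:minimizer}, so that $\mathbf{F}(j,\vec b)\ge \mathbf{F}(j,\vec b^{*})$ for all $j$ and it suffices to test the inequalities against $\vec b^{*}$. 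The essential new ingredient is the hypothesis $\sum_{i=1}^{R}b_{i}\le n\lambda$: it caps the out-degree that the exceptional coordinates can carry, so at most $n\lambda-Rm$ units of the total excess $n(\bar c-m)$ can be hidden there, and the remaining excess, exactly $n\bar c-nm-n\lambda+Rm$, must be absorbed by the normal coordinates, each bounded by $M$. This is precisely the numerator in the stated bound on $M$; requiring $k(M-m)$ to stay below it forces $\vec b^{*}$ to keep at least $k$ coordinates of size $\ge k$, whence $\mathbf{F}(k,\vec b^{*})\ge nm+k(k-m)$ (here the dichotomy $k\le M$ or $k\le n-n\lambda/m-R$ is what makes those $k$ tall coordinates realizable, and $n-n\lambda/m-R\ge 1$ keeps at least one coordinate at the minimum so the remainder lies in $[m,M]$).

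For the partial in-degree sums I would split the index range into three regimes. For $j<m$ every $b_{i}\ge m>j$, so $\mathbf{F}(j,\vec b)=nj\ge\sum_{i=1}^{j}a_{i}$ because $\max\vec d\le n$; these inequalities are free. For $m\le j\le R$ the hypothesis $\lambda<m$ does the work: since $j\ge m$ gives $\min(b_{i},j)\ge m$ for every $i$, one has $\sum_{i=1}^{j}a_{i}\le\sum_{i=1}^{R}a_{i}=n\lambda<nm\le\mathbf{F}(j,\vec b)$. This is exactly where the exceptional coordinates, whose in-degrees may far exceed $M$, are disposed of, and it explains why the ordinary linear bound $\sum_{i=1}^{j}a_{i}\le jM$, which fails on this block, is never needed. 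For $j>R$ the usable linear bound $\sum_{i=1}^{j}a_{i}\le n\lambda+(j-R)M$ returns.

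Evaluating this linear bound at $j=k$ and combining it with $kM\le n\bar c-nm-n\lambda+Rm+km$ and with $\mathbf{F}(k,\vec b^{*})\ge nm+k(k-m)$ reduces the $k$th inequality to the nonnegativity of
\[
R(k)=k^{2}-2mk+2nm-n\bar c-Rm,
\]
whose larger root is exactly $k_{*}=m+\sqrt{m^{2}+n(\bar c-2m)+Rm}$; since $k=\lceil k_{*}\rceil\ge k_{*}$ and $R$ opens upward, $R(k)\ge 0$. If the exceptional remainder exceeds $m$ I would verify the $(k+1)$st inequality through the analogous quadratic obtained by the same accounting with one extra unit, exactly as $S$ is produced from $R$ in Theorem 5. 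Finally, invoking Lemma 1 together with the concavity of $\mathbf{F}$ propagates these verified inequalities to the remaining indices: to every $j$ between the exceptional block and $k$, using $n\lambda+(j-R)M$ as the near-linear comparison function, and to every $j$ from $k$ (or $k+1$) up to $n$, where $\sum_{i=1}^{n}a_{i}=\mathbf{F}(n,\vec b)$ holds with equality.

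I expect the principal obstacle to be the construction and justification of $\vec b^{*}$ under the three simultaneous constraints---minimum $m$, the per-coordinate cap $M$ on the normal coordinates, and the aggregate cap $n\lambda$ on the exceptional out-degrees---and in particular showing that the aggregate cap prevents all of the excess from collapsing into a single uncapped exceptional coordinate, which would otherwise destroy the bound $\mathbf{F}(k,\vec b^{*})\ge nm+k(k-m)$. Tightly linked to this is confirming that the two alternative hypotheses $k\le M$ and $k\le n-n\lambda/m-R$ really do cover the cases $M\ge k$ and $M<k$, and checking the endpoints of the Lemma 1 applications (especially when $R<m$, where the exceptional block sits entirely inside the trivial range $j<m$ and the lower comparison endpoint must be chosen with care). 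Once the minimizer is pinned down, the regime split at $j=R$ and the quadratic bookkeeping are routine.
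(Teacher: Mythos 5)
Your plan follows the paper's own sketch essentially step for step: the same worst-case minimizer $\vec b^{*}$, the same disposal of the first $\max(R,m)$ inequalities via $\lambda<m$ and $\max\vec d\le n$, the same quadratic whose larger root is $k_{*}$, the same dichotomy between $k\le M$ and $k\le n-n\frac{\lambda}{m}-R$, and the same use of Lemma 1 to propagate to the remaining indices. The only adjustment needed is that the key inequality should be tested at $j=k+R$ (where $\sum_{i=1}^{k+R}a_{i}\le n\lambda+kM$, as in the paper) rather than at $j=k$; with your bound $n\lambda+(k-R)M$ at $j=k$ the $-Rm$ term in your quadratic $R(k)$ does not emerge from the bookkeeping.
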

\begin{proof}
The proof is quite similar to that of Theorem 5, so we only provide a sketch and leave the details to the reader.  Given that we defined  $\sum_{i=1}^{R}a_{i} = n\lambda$, and $\lambda<m$, the first $R$ inequalities of the Gale-Ryser Theorem are trivially satisfied.  Furthermore, the first $m$ inequalities are satisfied trivially as well since $\max \vec{d} \leq n$.

As in the proof of Theorem 5, we note that for arbitrary $k>0$, $\sum_{i=1}^{k+R}a_{i}\leq kM+n\lambda$.
For our minimizing degree sequence, $n\bar{c}=kM+(r-k)+(n-k)m+n\lambda-Rm$, \textcolor{black}{where $r$ is defined in the proof of Theorem 5} and hence $kM\leq n\bar{c}-(n-k)m+Rm-n\lambda$ since $r \geq m$. 
Thus, $kM+n\lambda 
\leq n\bar{c}+Rm-(n-k)m$.

Similarly, since we can assume that $k>m$, it follows that $nm+k(k-m)\leq\sum_{i=1}^{n}\min(b_{i},k)\leq \sum_{i=1}^{n}\min(b_{i},k+R)$, provided that $k\leq M$.  

Putting the bounds on $\sum_{i=1}^{k+R}a_{i}$ and $\sum_{i=1}^{n}\min(b_{i},k+R)$ together, to satisfy the remainder of the first $k+R$ Gale-Ryser inequalities, under the assumption that $k\leq M$, we want to fulfill the inequality $nm+k(k-m)-n\bar{c}-Rm+(n-k)m\geq 0$, where equality is achieved when $k=m+\sqrt{m^{2}+n(\bar{c}-2m)+Rm}$. Consequently, if $M\leq \left\lfloor \frac{n\bar{c}-n\lambda-(n-R)m}{\left\lceil k\right\rceil}+m\right\rfloor,$ then the first $k+R\leq M$ inequalities in the Gale-Ryser Theorem will be satisfied.
To finish off the proof, we then consider the case where $k>M$.
We know that $\sum_{i=1}^{n}\min(b_{i},k+R)\geq n\bar{c}-n\lambda$ and $\sum_{i=1}^{k+R}a_{i}\leq n\bar{c}-(n-k-R)m$.  Consequently, we require that $n\lambda+Rm\leq (n-k)m$, or equivalently, $k\leq (n-n\frac{\lambda}{m}-R)$.  Hence, our assumptions imply that  the first $M$ inequalities hold.

Now suppose for simplicity that $r=0$.  For the degree sequence that maximizes the in-degree vector $\vec{a}$ in the Gale-Ryser Theorem, $a_{j}=m$ for all $j>k+R$, and hence  $\sum_{i=1}^{j}a_{i}$ grows linearly in $j$ for these $j$.  We can therefore complete the proof by invoking  Lemma 1, since $\sum_{i=1}^{n}\min(b_{i},k)$ is concave in $k$.  This result implies that the remaining inequalities must hold.  
In the case where $r>0$, as before in Theorems 5 and 6, we exploit the existence of the remainder to construct refined inequalities that demonstrate that our prior choice for $M$ is indeed correct.
\end{proof}

Recalling Counterexample 1, the only way we were able to construct a degree sequence that was not graphic was by having many nodes with degrees greater than $\sqrt{n\bar{c}}$.   In contrast, Corollary 5 tells us that in an asymptotic sense, as long as we have a relatively small number of nodes $R$ with degrees that surpass $O(\sqrt{n})$, such that the sum of their degrees is $n\lambda=O(n^{1-\tau})$ for some $\tau>0$, then asymptotically we still have graphicality provided that $O(n)$ nodes are bounded in-degree by essentially the same bound derived in Theorems 5 and 6.  This observation is useful, for example, for broadening the graphicality criteria for so-called scale free networks with exponent greater than $2$. For such networks, we find that the expected number of edges contributed by nodes of degree greater than $\sqrt{n}$ is  $n\int_{\sqrt{n}}^{n}\frac{x}{x^{2+\tau}}dx=O(n^{1-\frac{\tau}{2}})$.  In this setting, Corollary 5 can be viewed in parallel with the prior work of Chen and Olvera-Cravioto \cite{Chen13}, who proved that provided the sum of the in-degrees equals the sum of the out-degrees, randomly generated degree sequences from a scale-free distribution with a finite mean (that is, with an exponent greater than 2) are asymptotically (almost surely) graphic. 

\section{Discussion}
While the famous Gale-Ryser inequalities (e.g., \cite{Berger11, Miller13}) provide necessary and sufficient conditions for a degree sequence to be graphic, checking these inequalities and using them to generate graphs \cite{Kim12} can be computationally inefficient.  Work by Zverovich, Alon, Cairns \textcolor{black}{and their collaborators} provides simplified sufficient conditions for graphicality; however, these conditions assume that the in-degree vector equals the out-degree vector for directed graphs and are posed in terms of the minimum and maximum of the degree sequence.  In our analysis, we drop the assumption that the in-degree vector equals the out-degree vector and prove an alternative sufficient condition for graphicality incorporating the average degree (Theorems 3, 5).  We prove that for fixed minimum and average degree, for sufficiently large $n$, Theorem 5 provides more flexible conditions to demonstrate graphicality than those provided by prior work.  The proof method used in this paper builds heavily on that used by Dahl and Flatberg \cite{dahl} and Miller \cite{Miller13} in their approaches to relaxing the graphicality conditions in the Erd\"{o}s-Gallai and Gale-Ryser Theorems, with the key idea being to exploit the discrete concavity of the functions appearing in the relevant inequalities.    Note that while all results in this paper are stated in terms of bidegree sequences for directed graphs, \textcolor{black}{these results apply immediately to bipartite graphs, while} the proof methods will extend directly to the case of undirected graphs as well.

In Counterexample 1, we show that we cannot expect to do much better than our sufficient conditions for graphicality using bounds on the average degree alone.  However, we also notice that  to construct a degree sequence that is not graphic, we must choose many nodes to have large degree.   
This observation motivates Corollary 5, which says that as long as only a relatively small number of node degrees exceed $O(\sqrt{n})$, we still have graphicality.  Interpreted in an asymptotic sense, we can relate this result to the work of Chen and Olvera-Cravioto \cite{Chen13}, which shows that asymptotically, degree sequences generated from scale-free distributions with exponent greater than $2$ almost surely will be graphic. 

\section{Acknowledgements}
This work was partially supported by NSF Award DMS 1312508.
The authors thank Jeffrey Wheeler (Pitt) for a thorough reading of a draft of this manuscript and for many suggestions that helped improve the exposition of this work, Noga Alon (Tel Aviv) for his insightful comments and the anonymous reviewers at SIDMA for their helpful comments.


\begin{thebibliography}{10}

\bibitem{Alb00}
  R{\'e}ka Albert, Hawoong Jeong, and Albert-L{\'a}szl{\'o} Barab{\'a}si.
  \newblock Error and attack tolerance of complex networks.
  \newblock {\em Nature}, 406 (6794): 378--382, 2000.

\bibitem{Alon10}
Noga Alon, Sonny Ben-Shimon, and Michael Krivelevich.
\newblock A note on regular ramsey graphs.
\newblock {\em Journal of Graph Theory}, 64(3):244--249, 2010.

\bibitem{Antsee82}
Richard P. Antsee
\newblock Properties of a class of (0, 1)-matrices covering a given matrix.
\newblock {\em Can. J. Math}, 34.2 (1982): 438-453.

\bibitem{Barnes95}
Tiffany M. Barnes and Carla D. Savage. 
\newblock A recurrence for counting graphical partitions.
\newblock {\em Electronic J. Comb 2} (1995): R11.

\bibitem{Barnes97}
Tiffany M. Barnes and Carla D. Savage. 
\newblock Efficient generation of graphical partitions.
\newblock {\em Discrete Applied Mathematics} 78, no. 1 (1997): 17-26. 

\bibitem{Berger11}
Annabell Berger.
\newblock A note on the characterization of digraph sequences.
\newblock {\em Discrete Mathematics} 314 (2014): 38-41.

\bibitem{Cairns14}
Grant Cairns, Stacey Mendan, and Yuri Nikolayevsky.
\newblock A sharp refinement of a result of {A}lon, {B}en-{S}himon and {K}rivelevich on
  bipartite graph vertex sequences.
\newblock {\em Discrete Mathematics} 338.7 (2015): 1085-1089.

\bibitem{Chen66}
Wai-Kai Chen.
\newblock On the realization of a (p, s)-digraph with prescribed degrees.
\newblock {\em Journal of the Franklin Institute} 281.5 (1966): 406-422.

\bibitem{Chen13}
Ningyuan Chen and Mariana Olvera-Cravioto.
\newblock Directed random graphs with given degree distributions.
\newblock {\em Stochastic Systems}, 3(1):147--186, 2013.

\bibitem{Chung02}
Fan Chung and Linyuan Lu.
\newblock Connected components in random graphs with given expected degree
  sequences.
\newblock {\em Annals of combinatorics}, 6(2):125--145, 2002.

\bibitem{dahl}
Geir Dahl and Truls Flatberg.
\newblock A remark concerning graphical sequences.
\newblock {\em Discrete Mathematics} 304(1): 62-64, 2005.

\bibitem{Erdos93}
Paul Erdős and Lawrence Bruce Richmond. 
\newblock On graphical partitions.
\newblock {\em Combinatorica} 13.1 (1993): 57-63.

\bibitem{Erdos10}
Péter L. Erdős, István Miklós, and Zoltán Toroczkai.
\newblock A simple Havel-Hakimi type algorithm to realize graphical degree sequences of directed graphs. 
\newblock {\em Electronic Journal of Combinatorics} 17.1 (2010): R66.

\bibitem{Fulkerson60}
Delbert R. Fulkerson
\newblock Zero-one matrices with zero trace.
\newblock {\em Pacific J. Math} 10.3 (1960): 831-836.

\bibitem{Gale57}
David Gale
\newblock A theorem on flows in networks.
\newblock {\em Pacific J. Math} 7.2 (1957): 1073-1082.

\bibitem{Hakimi65}
S.L. Hakimi 
\newblock On the degrees of the vertices of a directed graph. 
\newblock {\em Journal of the Franklin Institute}, 279(4), 290-308, 1965.

\bibitem{Kim12}
Hyunju Kim, Charo~I Del~Genio, Kevin~E Bassler, and Zolt{\'a}n Toroczkai.
\newblock Constructing and sampling directed graphs with given degree
  sequences.
\newblock {\em New Journal of Physics}, 14(2):023012, 2012.

\bibitem{Kleitman73}
Daniel J. Kleitman and Da-Lun Wang. 
\newblock Algorithms for constructing graphs and digraphs with given valences and factors.
\newblock {\em Discrete Mathematics} 6.1 (1973): 79-88.

\bibitem{Miller13}
Jeffrey~W. Miller.
\newblock Reduced criteria for degree sequences.
\newblock {\em Discrete Mathematics}, 313(4):550 -- 562, 2013.

\bibitem{Nolan98}
 Jennifer M. Nolan, Vijay Sivaraman, Carla D. Savage, and Pranav K. Tiwari. 
 \newblock Graphical basis partitions.
 \newblock {\em Graphs and Combinatorics} 14, no. 3 (1998): 241-261.

\bibitem{Pittel99}
Boris Pittel.
\newblock Confirming two conjectures about the integer partitions.
\newblock {\em Journal of Combinatorial Theory, Series A}, 88(1):123--135,
  1999.
  
\bibitem{Ryser57}
Herbert J. Ryser.
\newblock Combinatorial properties of matrices of zeros and ones.
\newblock {\em Canadian Journal of Mathematics} 9 (1957): 371-377.

\bibitem{Squartini11}
Tiziano Squartini and Diego Garlaschelli.
\newblock Analytical maximum-likelihood method to detect patterns in real
  networks.
\newblock {\em New Journal of Physics}, 13(8):083001, 2011.

\bibitem{zhao11}
L.~Zhao, B.~Beverlin, II, T.~Netoff, and D.Q.~Nykamp.
\newblock Synchronization from second order network connectivity statistics
\newblock {\em Frontiers in Computational Neuroscience}, 5, 2011.

\bibitem{Z92}
Igor~E Zverovich and Vadim~E Zverovich.
\newblock Contributions to the theory of graphic sequences.
\newblock {\em Discrete Mathematics}, 105(1):293--303, 1992.


\end{thebibliography}
\end{document}